\newtheorem{thm}{Theorem}[section]
\newtheorem{cor}[thm]{Corollary}
\newtheorem{lem}[thm]{Lemma}
\newtheorem{prop}[thm]{Proposition}
\newtheorem{result}[thm]{Result}
\theoremstyle{definition}
\theoremstyle{remark}
\numberwithin{equation}{section}
\newcommand{\norm}[1]{\left\Vert#1\right\Vert}
\newcommand{\abs}[1]{\left\vert#1\right\vert}
\newcommand{\D}{\mathbb{D}}
\newcommand{\rl}{{\mathbb{R}}}
\newcommand{\cx}{{\mathbb{C}}}
\newcommand{\B}{\mathcal{B}}
\newcommand{\dist}{{\rm dist}}
\newcommand*{\shifttext}[2]{%
  \settowidth{\@tempdima}{#2}%
  \makebox[\@tempdima]{\hspace*{#1}#2}%
}
\newcommand\scvert{\scalebox{1}[.61]{$|$}}
\newcommand{\myop}{\ensuremath \raisebox{2pt}{$\scvert$}}
\newcommand{\divdif}{\ensuremath \shifttext{6.325pt}{$\Delta$}\shifttext{-6.37pt}{$\myop$}}
\subjclass[2010]{32H40, 30E20.}
\title[H\"{o}lder estimates  and proper mappings]{H\"{o}lder estimates for Cauchy-Type Integrals and proper holomorphic mappings of symmetric products}
\author{Evan Castle}
\address{Unversity of Kentucky, Lexington, KY 40506, USA}
\email{evan.castle@uky.edu}
\author{Debraj Chakrabarti}
\address{Central Michigan University, Mt. Pleasant,  MI 48859, USA}
\email{chakr2d@cmich.edu}
\author{David Gunderman}
\address{Wabash College, Crawfordsville, IN 46077, USA}
\email{djgunder15@wabash.edu}
\author{Ellen Lehet}
\address{SUNY Potsdam, Potsdam, NY 13676, USA}
\email{lehetev195@potsdam.edu}
\date{}
\begin{document}
\maketitle
\begin{abstract}We prove estimates in H\"{o}lder spaces for some Cauchy-type integral operators representing holomorphic functions
in Cartesian and symmetric products of planar domains. As a consequence, we obtain information on the boundary regularity in H\"{o}lder spaces of proper holomorphic maps between symmetric products of planar domains.
\end{abstract}
\section{Introduction}
Let $\pi: \cx^n \to \cx^n$ be the proper holomorphic map  (called the {\em symmetrization map}) given by
\begin{equation}\label{eq-pidef}
\pi(z)=\left( \pi_1(z),\dots,\pi_n(z) \right),
\end{equation}
where $\pi_j(z)$ is the $j$-th elementary symmetric polynomial in $n$ variables. Recall that $\pi_1(z)= \sum_{k}z_k, \hspace{2mm} \pi_2(z)=\sum_{k<l}z_k z_l,$ etc. If $U$ is a bounded domain in the plane, we denote by $\Sigma^n U$, the $n$-fold {\em symmetric product} of the domain $U$ with itself, which is by definition the image of the $n$-fold Cartesian product $U^n=U\times\dots \times U$ under the map $\pi$: 
\begin{equation}\label{eq-sigmandef}
\Sigma^n U = \pi(U^n).
\end{equation}
$\Sigma^n U$ is a pseudoconvex domain in $\cx^n$ with non-Lipschitz boundary (see \cite[Proposition~5.3]{cg}). When $U=\D$, the unit disc, the 
domain $\Sigma^n \D$, under the name {\em symmetrized polydisc} arises naturally in problems of control theory (see \cite{ay1}), and 
has been studied intensively, both from the 
operator-theoretic and  function-theoretic point of view (see e.g. \cite{ay2,costara,costara2, edi1,ediz1,ay3,cg} etc).  In particular, given two bounded planar domains $U$ and $V$,
 the proper holomorphic maps from $\Sigma^n U$ to
$\Sigma^n V$ have been classified: they turn out to be functorially induced by proper holomorphic maps from $U$ to $V$ (see \cite{edi1, ediz1,cg}, and Section~\ref{sec-proper} below).

It is natural to ask how these proper holomorphic maps behave at the boundary, and this question is especially interesting since the boundaries 
of $\Sigma^n U$ and $\Sigma^n V$ are non-Lipschitz. It was shown in \cite{cg} that if $U$ and $V$ have $\mathcal{C}^\infty$-smooth boundaries, then a proper holomorphic map from $\Sigma^n U$ to $\Sigma^n V$ extends 
to a $\mathcal{C}^\infty$ map of the closures. In this paper, we consider the situation when $\partial U$ and $\partial V$ have finite order of differentiability. Let $\mathcal{C}^{k,\alpha}(U)$ denote the space of functions on a domain $U$ whose $k$-th order partial derivatives exist and  satisfy a  uniform H\"{o}lder condition of order $\alpha$. 
For a domain $U\subset\cx^n$, we denote by 
$\mathcal{A}^{k,\alpha}(U)$ the space $\mathcal{O}(U)\cap \mathcal{C}^{k,\alpha}(U)$
of holomorphic functions or maps which are of class $\mathcal{C}^{k,\alpha}(U)$.
Our main result in this direction is the following:

\begin{thm}\label{thm-proper}
Let $n\geq 1,k\geq 0$ be integers, and let 
\begin{equation}\label{eq-lambdan} \Lambda_n = \begin{cases} n! &\text{if $n\leq 3$}\\\frac{3}{2} n! &\text{if $n>3$.}\end{cases}
\end{equation}
 If  $U,V$
are planar domains with boundaries of class $\mathcal{C}^{n(k+1)+1}$ and $F:\Sigma^n U\to \Sigma^n V$ is a proper holomorphic map, then $F\in \mathcal{A}^{k,\frac{\theta}{\Lambda_n}}(\Sigma^n U)$ for each $0<\theta<1$.
\end{thm}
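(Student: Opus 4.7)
The plan is to reduce $F$ to an underlying proper planar map $f : U \to V$, and then apply the H\"older estimates for the Cauchy-type integral operator on $\Sigma^n U$ that constitute the other main theme of this paper. The point is that a na\"ive computation --- expressing $F_j(w)$ as a symmetric polynomial in $f(z_1),\dots,f(z_n)$, where $z_i$ are the roots of a polynomial with coefficients determined by $w$ --- only gives $\mathcal{C}^{\theta/n}$ regularity of the components of $F$, with no access to higher derivatives, because the individual roots are only $\mathcal{C}^{1/n}$ in the coefficients. The Cauchy-type integral machinery is what lets one exploit the $S_n$-symmetry of the set-up to recover higher-order regularity.

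First, by the classification of proper holomorphic maps between symmetric products (\cite{edi1, ediz1, cg}), every proper holomorphic $F : \Sigma^n U \to \Sigma^n V$ is functorially induced by a proper holomorphic map $f : U \to V$, meaning
\begin{equation*}
F \circ \pi = \pi \circ \widetilde{f}, \qquad \widetilde{f}(z_1,\dots,z_n) = (f(z_1),\dots,f(z_n)).
\end{equation*}
Classical boundary regularity for proper maps of planar domains (a Kellogg--Warschawski argument applied to the finite branched cover $f$) combined with the hypothesis $\partial U, \partial V \in \mathcal{C}^{n(k+1)+1}$ gives $f \in \mathcal{A}^{n(k+1),\theta}(U)$ for each $\theta \in (0,1)$, and hence $F \circ \pi \in \mathcal{A}^{n(k+1),\theta}(U^n)$, with $F \circ \pi$ being $S_n$-invariant on $U^n$.

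The key step is to transfer this regularity from the smooth polydomain $U^n$ to the non-Lipschitz domain $\Sigma^n U$. I would do so by representing $F$ on $\Sigma^n U$ through the Cauchy-type integral operator constructed earlier in the paper, supplying it with boundary data obtained by pushing forward through $\pi$ the $S_n$-symmetric boundary values of $F \circ \pi$ from $\partial(U^n)$. The H\"older estimates for this operator, which are the central technical result of the Cauchy-integral portion of the paper, should then translate $\mathcal{C}^{n(k+1),\theta}$ regularity of the symmetrized data into $\mathcal{C}^{k,\theta/\Lambda_n}$ regularity of the representing integral on $\Sigma^n U$, yielding the claim.

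The main obstacle is this last estimate: quantifying exactly how H\"older regularity degrades under the symmetrization $\pi$, whose discriminant locus is precisely what causes the failure of Lipschitz regularity of $\partial \Sigma^n U$. The factor $\Lambda_n$ is the sharp loss for this pushforward. I would expect the jump from $n!$ to $\tfrac{3}{2} n!$ at $n = 4$ to reflect a more intricate stratification of the coincidence patterns available when four or more of the $z_i$ can collide simultaneously --- for $n \leq 3$ a single ``all collide'' stratum dominates, while for $n \geq 4$ mixed strata (e.g.\ two pairs of coinciding points) begin to play a role in the estimate. The other ingredients (the structure theorem and planar boundary regularity) are classical, so essentially all the analytic work is concentrated in this Cauchy-integral estimate.
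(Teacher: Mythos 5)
Your overall route is the paper's: classify $F$ as $\Sigma^n f$ for a proper holomorphic $f:U\to V$, obtain $f\in\mathcal{C}^{n(k+1),\theta}(U)$ from Kellogg--Warschawski, and transfer regularity to $\Sigma^n U$ via the H\"{o}lder estimate for the symmetrized Cauchy transform $\mathcal{E}_n$ (Theorem~\ref{thm-an}). The gap is the middle step, which you leave as an assertion and describe in a way that does not work. The operator $\mathcal{E}_n$ acts on functions defined on the planar curve $\Gamma=\partial U$, not on boundary values of $F\circ\pi$ living on $\partial(U^n)$ pushed forward by $\pi$; and since $\mathcal{E}_n$ is far from surjective onto holomorphic functions on $\Sigma^n U$ (the paper devotes a section to characterizing its range, and e.g.\ $z_1z_2$ is not in the range of $\mathcal{B}_2$), one cannot simply posit that $F$ ``is represented by'' such an integral. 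A concrete formula is needed. The paper takes it from \cite[Proposition~2.4]{cg}: up to composition with a polynomial automorphism $\mathfrak{P}$ of $\cx^n$, the components of $\Sigma^n f$ are the Newton-power-sum integrals
\[
\Psi_\ell(z)=\frac{1}{2\pi i}\int_{\Gamma}\bigl(f(t)\bigr)^\ell\,\frac{q_n'(z,t)}{q_n(z,t)}\,dt
=\sum_{j=0}^{n-1}(-1)^j(n-j)\,z_j\,\mathcal{E}_n\!\left(t^{n-j-1}\bigl(f(t)\bigr)^\ell\big|_{\Gamma}\right),
\]
each summand of which is a product of a polynomial with $\mathcal{E}_n$ applied to a datum in $\mathcal{C}^{n(k+1)-1,\theta}(\Gamma)$, so Theorem~\ref{thm-an} applies term by term. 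Without this (or an equivalent) representation the argument does not close.

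A secondary correction: your heuristic for $\Lambda_n$ is not where the exponent comes from. In the paper it is the {\L}ojasiewicz exponent in the global inequality of Ji--Koll\'{a}r--Shiffman applied to the system $\pi_i(z)-\pi_i(w)=0$, $1\leq i\leq n$, in $\cx^{2n}$, which yields $\delta(z,w)^{\Lambda_n}\leq C\,\abs{\pi(z)-\pi(w)}$ for the quotient metric $\delta$; this is exactly the estimate controlling how H\"{o}lder regularity degrades under $\pi_*$. The jump from $n!$ to $\tfrac{3}{2}n!$ at $n=4$ is an artifact of the factor $(3/2)^j$ in their explicit constant $\overline{B}$, with $j$ counting degree-$2$ equations among the first $n-2$ (so $j=1$ precisely when $n>3$); it does not reflect a change in the stratification of coincidence loci, and the paper explicitly does not claim the exponent is sharp.
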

Note the loss of derivatives -- which is not surprising if one remembers that symmetric products have highly singular non-Lipschitz boundaries.

The proof of Theorem~\ref{thm-proper} can be reduced to the study of the regularity in H\"{o}lder spaces of certain 
integral operators closely related to the classical Cauchy operator. Let $\Gamma$ be the boundary of a smoothly bounded 
planar domain $U$,  and let $p:\cx^n\times \cx\to \cx$  be a holomorphic 
polynomial in $(n+1)$ complex variables.  We can associate with $p$ an integral operator $\mathcal{D}_p$ defined by
\begin{equation}\label{eq-dp} \mathcal{D}_p \phi(z) = \frac{1}{2\pi i} \int_\Gamma \frac{\phi(t)}{p(z,t)}dt,\end{equation}
acting on continuous functions defined on $\Gamma$. Note that $\mathcal{D}_p\phi$ is a holomorphic function on the 
open set $\cx^n\setminus \Gamma^*_p$, where 
\begin{equation}\label{eq-gammastarp}
\Gamma^*_p =\Set{w\in \cx^n \ | \ \text{ there is a $t\in\Gamma$ such that $p(w,t)=0$}}.
\end{equation}
When $n=1$ and $p(z,t)=t-z$, we obtain the classical  {\em Cauchy Transform} or the {\em Cauchy Integral}, the operator  $\mathcal{T}$ given by
\begin{equation}\label{eq-cauchytransform} \mathcal{T}\phi(z)=\frac{1}{2\pi i}\int_{\Gamma} \frac{\phi(t)}{t-z}dt,\end{equation}
which defines $\mathcal{T}\phi$ as a holomorphic function for  $z\in \cx\setminus \Gamma$. For our application, we will consider two special cases of the general integral $\mathcal{D}_p$. In our first example, we take $p= q_n$, the polynomial of degree $n$ given by
\begin{equation}\label{eq-p1}
q_n(z_1,\dots, z_n, t) = t^n-z_1t^{n-1}+z_2t^{n-2}-\dots +(-1)^n z_n.
\end{equation}
For this $q_n$, if $\cx \setminus \Gamma$ has $\kappa$ components, then one can show that $\cx^n\setminus \Gamma^*_{q_n}$ consists 
of $\binom{n+\kappa-1}{\kappa-1}$ connected components (see Section~\ref{sec-remarks}),  one of  which is precisely the symmetric product $\Sigma^nU$ defined in \eqref{eq-sigmandef}. We denote by $\mathcal{E}_n$ the operator defined by restriction of $\mathcal{D}_{q_n}$ 
\begin{equation}\label{eq-an}
\mathcal{E}_n\phi = (\mathcal{D}_{q_n} \phi)|_{\Sigma^n U},
\end{equation}
which maps continuous functions on $\Gamma$ to holomorphic functions on $\Sigma^nU$.  Note that $\mathcal{E}_1=\mathcal{T}$, the Cauchy transform, and we may call $\mathcal{E}_n$
the {\em symmetrized Cauchy transform in $n$ variables.} We prove the following regularity result for the map $\mathcal{E}_n$:
\begin{thm}\label{thm-an} Let $k\geq 0, n\geq 1$ be integers and
 let the boundary $\Gamma$ of $U$ be of class $\mathcal{C}^{nk+n+1}$. Then,  for $0<\alpha<1$,
the map $\mathcal{E}_n$ is continuous from the space $\mathcal{C}^{(k+1)n-1,\alpha}(\Gamma)$ to the space $\mathcal{A}^{k,\frac{\alpha}{\Lambda_n}}(\Sigma^n U)$,
where $\Lambda_n$ is as in \eqref{eq-lambdan}
\end{thm}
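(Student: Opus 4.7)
The plan is to reduce the regularity of $\mathcal{E}_n$ to the classical Hölder regularity of the ordinary Cauchy transform $\mathcal{T}$ by pulling back through the symmetrization map $\pi$. The key structural observation is the factorization
\[
q_n(\pi(w), t) = \prod_{j=1}^{n}(t - w_j),
\]
which simply expresses that $q_n(z,\cdot)$ is the monic polynomial in $t$ whose roots are the coordinates of any preimage of $z$ under $\pi$. Substituting into the definition of $\mathcal{E}_n\phi$ and applying a partial-fractions decomposition (valid where the $w_j$ are distinct, and extending to all of $U^n$ by continuity) gives
\[
\mathcal{E}_n\phi(\pi(w)) = \sum_{j=1}^{n} \frac{\mathcal{T}\phi(w_j)}{\prod_{k \neq j}(w_j - w_k)} = G[w_1,\ldots,w_n],
\]
the Newton divided difference at $w_1,\dots, w_n$ of the function $G := \mathcal{T}\phi$, holomorphic on $U$.

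Next, I would invoke classical Plemelj--Privalov-type regularity for the Cauchy transform: for $\Gamma$ of class $\mathcal{C}^{m+1}$ and $\phi \in \mathcal{C}^{m,\alpha}(\Gamma)$, the transform $\mathcal{T}$ maps continuously into $\mathcal{A}^{m,\alpha}(\overline U)$. With $m = (k+1)n - 1$, the assumptions $\Gamma \in \mathcal{C}^{nk+n+1}$ and $\phi \in \mathcal{C}^{(k+1)n-1,\alpha}(\Gamma)$ are more than enough to guarantee $G \in \mathcal{A}^{(k+1)n-1,\alpha}(\overline U)$.

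With $G$ in hand, I would use the Hermite--Genocchi representation of the divided difference,
\[
G[w_1,\ldots,w_n] \;=\; \int_{\Delta_{n-1}} G^{(n-1)}\!\left( \sum_{j=1}^n s_j w_j \right) ds,
\]
where $\Delta_{n-1}$ is the standard simplex in $\rl^n$. Differentiating under the integral in the $w_j$'s trades each derivative for a factor $s_j$ together with one extra derivative of $G$; since $G \in \mathcal{C}^{(k+1)n-1,\alpha}$ we may absorb up to $(k+1)n-1 - (n-1) = kn$ partial derivatives in $w$ and retain the Hölder exponent $\alpha$. Consequently the symmetric function $\widetilde F(w) := G[w_1,\ldots,w_n]$ is holomorphic on $U^n$ and belongs to $\mathcal{A}^{kn,\alpha}(\overline{U^n})$, and by construction it satisfies $\widetilde F = (\mathcal{E}_n\phi) \circ \pi$.

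The final and hardest step is to descend from the symmetric function $\widetilde F \in \mathcal{A}^{kn,\alpha}(\overline{U^n})$ to its quotient $F = \mathcal{E}_n\phi \in \mathcal{A}^{k,\alpha/\Lambda_n}(\Sigma^n U)$. The natural tool here is a quantitative Hölder comparison between $\overline{U^n}$ and $\overline{\Sigma^n U}$ coming from root continuity of monic polynomials: for $v, v' \in \overline{\Sigma^n U}$ one can choose preimages $u, u'$ under $\pi$ such that $|u - u'|$ is controlled by a fractional power of $|v - v'|$ whose exponent is determined by the worst branching order of $\pi$ at the relevant boundary stratum of $\Sigma^n U$. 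Matching this against the $\mathcal{C}^{kn,\alpha}$ regularity of $\widetilde F$ should produce the claimed loss of a factor of $n$ in the number of derivatives and a factor of $\Lambda_n$ in the Hölder exponent. I expect this descent --- and in particular identifying $\Lambda_n$ (and justifying the jump in its definition at $n = 4$) as the sharp combinatorial constant arising from the partition stratification of $\partial \Sigma^n U$ --- to be the main technical obstacle.
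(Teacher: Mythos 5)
Your first half follows essentially the same route as the paper: the identity $q_n(\pi(w),t)=\prod_j(t-w_j)$ turns $\mathcal{E}_n\phi\circ\pi$ into the Cauchy--N{\o}rlund transform $\mathcal{B}_n\phi$, which is the $(n-1)$-th divided difference of $\mathcal{T}\phi$ (the paper's Proposition~\ref{prop-difference}), and the Genocchi--Hermite formula then gives the regularity of $\widetilde F=(\mathcal{T}\phi)^{[n-1]}$ on $U^n$. One caveat even here: Genocchi--Hermite requires the convex combinations $\sum_j s_jw_j$ to lie in $U$, so your argument as written only covers convex $U$; for general (non-convex, multiply connected) domains the paper has to pass through a Riemann map, a Fa\`a di Bruno formula for divided differences, and a covering argument.

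The genuine gap is the descent. Your scheme establishes $\widetilde F=F\circ\pi\in\mathcal{C}^{kn,\alpha}(\overline{U^n})$ and then proposes to deduce $F\in\mathcal{C}^{k,\alpha/\Lambda_n}(\Sigma^nU)$ by choosing H\"older-comparable preimages. That argument controls only the $\mathcal{C}^{0,\alpha/\Lambda_n}$ seminorm of $F$ itself; it says nothing about $\partial^\gamma F$ for $|\gamma|=k\geq 1$, because recovering derivatives of $F$ from derivatives of $F\circ\pi$ requires inverting the Jacobian of $\pi$, whose determinant is the Vandermonde $\prod_{i<j}(w_i-w_j)$ and degenerates exactly on the diagonal --- the locus responsible for the singular boundary of $\Sigma^nU$. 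The paper avoids this entirely by differentiating the kernel rather than the quotient function: $\partial^\gamma\bigl(1/q_n(z,t)\bigr)=u_\gamma(t)/q_n(z,t)^{|\gamma|+1}$, which yields the operator identity $\partial^\gamma\circ\mathcal{E}_n=\pi_*\circ j^*_{k+1}\circ\mathcal{B}_{n(k+1)}\circ M_\gamma$, so that each top-order derivative of $\mathcal{E}_n\phi$ is itself the push-forward of a $\mathcal{C}^{0,\alpha}$ symmetric function on $U^{n(k+1)}$ restricted to the diagonal. This is precisely why the hypothesis demands $\phi\in\mathcal{C}^{n(k+1)-1,\alpha}(\Gamma)$ rather than the $\mathcal{C}^{k+n-1,\alpha}$ one would expect from your count. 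Finally, the quantitative comparison $\delta(z,w)^{\Lambda_n}\leq C|\pi(z)-\pi(w)|$ that you defer is not a stratification computation in the paper: it is Lemma~\ref{lem-loja}, obtained by applying the Ji--Koll\'ar--Shiffman global {\L}ojasiewicz inequality to the polynomials $\pi_i(z)-\pi_i(w)$ on $\cx^{2n}$, and the jump in $\Lambda_n$ at $n=4$ is an artifact of their explicit exponent $\overline{B}$, with no claim of sharpness. You would need to supply both the derivative-descent mechanism and this inequality for the proof to close.
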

Here $\mathcal{C}^{k,\alpha}(\Gamma)$ is the space of functions on the smooth curve $\Gamma$ which are $k$
times continuously differentiable with respect to arc length, and such that the $k$-th derivative is H\"{o}lder continuous of order $\alpha$.
Our second example corresponds to the choice $p=\omega_n$, where
\begin{equation}\label{eq-p2}
\omega_n(z_1,\dots,z_n,t)= \prod_{j=1}^n(t-z_j).
\end{equation}
Assuming that $\cx\setminus \Gamma$ 
has $\kappa$ components, the space $\cx^n\setminus \Gamma_{\omega_n}^*$ has $\kappa^n$ components, 
and one of these components is the $n$-fold cartesian product $U^n$. We define the operator
$\B_n$ again by restriction to $U^n$:
\begin{equation}
\B_n \phi =  (\mathcal{D}_{\omega_n} \phi)|_{ U^n},
\end{equation}
Note that $\B_1=\mathcal{T}$, and for $n\geq 2$, we will call $\B_n$ the {\em Cauchy-N{\o}rlund Transform}  (cf. \cite[p.~199]{nor}). Denote by $\mathcal{A}^{k,\alpha}_{\rm sym}(U^n)$ the intersection $\mathcal{C}^{k,\alpha}(U^n)\cap \mathcal{O}_{\rm sym}(U^n)$, where $ \mathcal{O}_{\rm sym}(U^n)$ is the space of {\em symmetric 
holomorphic functions}.  Recall that a function $f\in \mathcal{O}(U^n)$ is 
{\em symmetric} if for each permutation $\sigma$ in the symmetric group $S_n$ and for each $z\in U^n$, we have  $ f (z)= f(\sigma(z))$, where
\begin{equation}\label{eq-sigmaaction}
 \sigma(z_1,\dots, z_n)= \left(z_{\sigma(1)},\dots, z_{\sigma(n)}\right).
\end{equation}
Our main result regarding $\B_n$  is:
\begin{thm} \label{thm-main}Let $k\geq 0, n\geq 1$ be integers and
 let the boundary $\Gamma$ of $U$ be of class $\mathcal{C}^{k+n+1}$. Then,  for $0<\alpha<1$, the  Cauchy-N{\o}rlund Transform $\B_n$ is a continuous linear map from $\mathcal{C}^{k+n-1,\alpha}(\Gamma)$ to $\mathcal{A}^{k,\alpha}_{\rm sym}(U^n)$.
\end{thm}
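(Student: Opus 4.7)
The plan is to interpret $\B_n\phi$ as the $(n-1)$-th Newton divided difference of the classical Cauchy transform $\mathcal{T}\phi$, and then to deduce the claimed H\"older regularity from standard boundary estimates for $\mathcal{T}$ combined with the Hermite--Genocchi representation of divided differences. Symmetry of $\B_n\phi$ in $z_1,\dots,z_n$ is immediate from the symmetry of the kernel, and holomorphy on $U^n$ follows from differentiating under the integral sign. The key observation is that for $z=(z_1,\dots,z_n)\in U^n$ with pairwise distinct components, the elementary partial-fraction identity
\[
\frac{1}{\prod_{j=1}^n(t-z_j)}=\sum_{j=1}^n\frac{1}{(t-z_j)\prod_{\ell\ne j}(z_j-z_\ell)}
\]
combined with \eqref{eq-cauchytransform} gives
\[
\B_n\phi(z)=\sum_{j=1}^n\frac{\mathcal{T}\phi(z_j)}{\prod_{\ell\ne j}(z_j-z_\ell)}=[z_1,\dots,z_n]\,\mathcal{T}\phi,
\]
and since $\B_n\phi$ is holomorphic on all of $U^n$, it furnishes the unique holomorphic extension of this divided difference across the diagonal.

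Next I would invoke the classical Plemelj--Privalov/Hardy--Littlewood regularity theorem for the Cauchy integral, which under the hypotheses $\phi\in\mathcal{C}^{k+n-1,\alpha}(\Gamma)$ and $\Gamma\in\mathcal{C}^{k+n+1}$ yields $g:=\mathcal{T}\phi\in\mathcal{A}^{k+n-1,\alpha}(U)$ with a continuous estimate on the norms. In particular, the complex derivative $g^{(k+n-1)}$ is bounded and $\alpha$-H\"older continuous on $\overline{U}$.

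When the convex hull of the nodes $z_1,\dots,z_n$ lies in $U$, the holomorphy of $g$ permits the Hermite--Genocchi representation
\[
[z_1,\dots,z_n]\,g=\int_{\Delta_{n-1}}g^{(n-1)}(s_1z_1+\cdots+s_nz_n)\,d\sigma(s),
\]
with $\Delta_{n-1}$ the standard $(n-1)$-simplex. Differentiating under the integral yields, for any multi-index $\beta$,
\[
\partial^\beta\B_n\phi(z)=\int_{\Delta_{n-1}}s^\beta\,g^{(n-1+|\beta|)}(s_1z_1+\cdots+s_nz_n)\,d\sigma(s),
\]
and the boundedness and $\alpha$-H\"older continuity of $g^{(k+n-1)}$ on $\overline{U}$ translate directly into the required $\mathcal{C}^{k,\alpha}$ bound on $\partial^\beta\B_n\phi$ for $|\beta|\le k$.

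The hard part will be the non-convex case, since then the convex hull of the $z_j$ can leave $U$ and Hermite--Genocchi does not apply globally. I would circumvent this locally: near each $z^0\in\overline{U^n}$, partition the indices $\{1,\dots,n\}$ into clusters on which the coordinates of $z^0$ agree; the smoothness of $\Gamma$ provides a convex neighborhood in $\overline{U}$ about each cluster point, inside which the intra-cluster coordinates of nearby $z$ remain confined, while cross-cluster differences $z_j-z_\ell$ stay uniformly bounded below. A decomposition of the divided difference (Hermite--Genocchi within each cluster, smooth separated factors between clusters) produces uniform local H\"older estimates on $\partial^\beta\B_n\phi$, which a compactness argument on $\overline{U^n}$ assembles into the desired global bound.
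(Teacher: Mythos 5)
Your identification of $\B_n\phi$ with the $(n-1)$-th divided difference of $g=\mathcal{T}\phi$, and the use of the Hermite--Genocchi formula together with the classical $\mathcal{C}^{k+n-1,\alpha}$ regularity of the Cauchy transform, is exactly the paper's argument for the convex case (Proposition~\ref{prop-difference} and Lemma~\ref{lem-deltacont}). The gap is in your treatment of non-convex $U$. You assert that ``the smoothness of $\Gamma$ provides a convex neighborhood in $\overline{U}$ about each cluster point.'' This is false at any boundary point where $\Gamma$ is concave relative to $U$: if $U$ is locally of the form $\{y>-x^2\}$ near $p=0$, then every relative neighborhood of $p$ in $\overline{U}$ contains $(\pm\e,-\e^2)$ but not their midpoint $(0,-\e^2)$, so no convex subset of $\overline{U}$ is a relative neighborhood of $p$. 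Since a non-convex simply connected smooth domain necessarily has such boundary points, and Hermite--Genocchi requires the full convex hull of the intra-cluster nodes to lie in the domain of holomorphy of $g$ (a Whitney $\mathcal{C}^{k,\alpha}$ extension of $g^{(n-1)}$ across $\Gamma$ would not be holomorphic and would not reproduce the divided difference), your local argument breaks down precisely where the difficulty lives. Your cluster decomposition also needs an explicit identity expressing the full divided difference in terms of intra-cluster divided differences and cross-cluster factors; this exists (a Leibniz/partial-fraction grouping) but is not supplied.

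The paper circumvents this differently: for simply connected non-convex $U$ it writes $f=g\circ\Lambda$ with $\Lambda=\Psi^{-1}$ the Riemann map (whose boundary regularity comes from the Kellogg--Warschawski theorem), and applies the Floater--Lyche chain rule for divided differences of composite functions, so that Hermite--Genocchi is only ever invoked for $g^{[j]}$ on the globally convex disc $\D$; the multiply connected case is then reduced to the simply connected one by covering $U$ with finitely many well-separated simply connected subdomains and using that a bounded, uniformly locally H\"{o}lder function is H\"{o}lder. To repair your proof you would need either to import this conformal-transfer step or to replace local convexity by some other device valid at concave boundary points; as written, the non-convex case is not established.
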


This paper is organized as follows. After reviewing some elementary properties of the operators $\B_n$ and $\mathcal{E}_n$ introduced above in Section~\ref{sec-remarks}, in Section~\ref{sec-bnconvex} we introduce the key observation (Proposition~\ref{prop-difference}), that $\B_n$ can be represented in terms of {\em divided differences}, discrete analogs of derivatives which occur in Newton's interpolation formula. This, along with a representation \eqref{eq-gh} of 
the divided difference allow us to prove Theorem~\ref{thm-main}  for convex domains $U$, and the general case follows in Section~\ref{sec-thmmainproof} by conformal mapping and a covering argument using the fact that bounded locally H\"{o}lder  functions are H\"{o}lder. In Section~\ref{sec-proofthman} Theorem~\ref{thm-an} is deduced from Theorem~\ref{thm-main} and a distortion estimate for the map $\pi$, which we obtain using the results of \cite{loj}. Finally, in Section~\ref{sec-proper}, a representation of proper holomorphic maps between symmetric products as an integral from \cite{cg} is used along with Theorem~\ref{thm-an} to obtain Theorem~\ref{thm-proper}. We note here that the statements  of Theorem~\ref{thm-an} and Theorem~\ref{thm-proper} can probably be improved, and slightly better regularity results may be obtained by more work. The non-optimality is because of  possible loss of information in the course of the proof. It will be interesting to obtain sharp results in this direction.

\section{Acknowledgements} This research was conducted in Summer 2014 at the NSF funded Research Experience for Undergraduates program at Central Michigan University, where Chakrabarti served as the faculty advisor and Castle, Gunderman and Lehet as  student participants in one of the research teams. We gratefully acknowledge the support of the NSF through their grant DMS 11-56890. Very special thanks are due
to Sivaram Narayan and the Mathematics Department of Central Michigan University for the excellent organization of the REU program, and to Ben Salisbury for discussions on algebraic aspects of symmetric functions.
Debraj Chakrabarti would like to thank Sagun Chanillo and Sushil Gorai for 
interesting discussions about the topic of this paper, and also the Office of Research and Sponsored Programs at Central Michigan University for their generous support through an ECI grant.

\section{Remarks on  the operators $\B_n$ and $\mathcal{E}_n$}
\label{sec-remarks}
The integral \eqref{eq-dp} defining $\mathcal{D}_p$ makes sense at $z\in \cx^n$, provided $p(z,t)$ does not vanish for any $t\in \Gamma$, i.e., 
$z\in \cx^n\setminus \Gamma^*_p$. By differentiation under the integral sign, $\mathcal{D}_p\phi$  is a holomorphic function on 
$\cx^n\setminus \Gamma^*_p$, and by writing 
\[ \cx^n \setminus \Gamma^*_p= \bigcap_{t\in\Gamma}
\Set{ z\in \cx^n | p(z,t)\not =0},\]
we see that $\cx^n\setminus \Gamma^*_p$, being the intersection of complements of complex hypersurfaces is pseudoconvex. In general, 
$\cx^n\setminus \Gamma_p^*$ is not connected, and therefore,  each of its 
components is pseudoconvex. 

Suppose that $\cx\setminus \Gamma$ consists of $\kappa$ components, $U_1,\dots, U_\kappa$, where we can assume
that $U_1=U$ (where $\partial U=\Gamma$), $U_2$ is unbounded and the other $\kappa-1$ components are bounded. 
When $p=q_n$, with $q_n$ as in \eqref{eq-p1}, we can associate with any $\kappa$-tuple of non-negative integers $(m_1,\dots, m_\kappa)$ with 
$\sum_{j=1}^\kappa m_j =n$, the subset of $\cx^n \setminus \Gamma^*_{q_n}$ given by
\[ U(m_1,\dots, m_\kappa)= \Set{z\in \cx^n| \text{ $m_j$ roots of $q_n(z,t)$ lie in $U_j$, for $j=1,\dots, \kappa$}},\]
where for a fixed $z$, we think of $q_n(z,t)$ as a polynomial in the variable $t$. An argument using continuity of roots as functions of coefficients shows that each of $U(m_1,\dots,m_\kappa)$ is connected. Since $\cx^n\setminus \Gamma^*_{q_n}$ is the disjoint union of the sets $U(m_1,\dots, m_\kappa)$ as the  $\kappa$-tuple $(m_1,\dots, m_\kappa)$  ranges over all combinations such that $\sum_{j=1}^\kappa m_j =n$, it follows that the  connected components of the set $\cx^n\setminus\Gamma^*_{q_n}$ are precisely the $U(m_1,\dots, m_\kappa)$, of which, therefore, there are a total of $  \binom{n+\kappa-1}{\kappa-1}.$
We are particularly interested in the component $U(n,0,\dots,0)$, which is  described by
\[ U(n,0,\dots, 0)= \Set{z\in \cx^n  |\text{ all roots of $q_n(z,t)$ lie in $U$}}.\]
We claim that $U(n,0,\dots,0)= \Sigma^n U =\pi(U^n)$. Indeed,   let $z\in U(n,0,\dots,0)$, and 
let $w_1,\dots,w_n$ be the roots of the polynomial $q_n(z,t)=t^n-z_1t^{n-1}+\dots(-1)^nz_n$. Then, by the definition of $U(n,0,\dots,0)$, each $w_j$ belongs to $U$. Consider the point $w \in U^n$ given by $w=(w_1,\dots,w_n).$ Then, by the relationship between the roots and coefficients of a polynomial, we see that $z=\pi(w)$ so that $z\in \Sigma^n U$. Conversely, if $z\in \Sigma^n U$, let $w=(w_1,\dots, w_n)\in U^n$ be such that $z=\pi(w)$. Then clearly $w_1,\dots, w_n\in U$ are the roots of the equation $q_n(z,t)=0$, so that $z\in U(n,0,\dots,0)$.

Note that the above argument also proves the following fact, which will be used later. If $z,w\in U^n$ are such that $\pi(z)=\pi(w)$, then there is a permutation $\sigma\in S_n$ such that $z=\sigma(w)$, where $\sigma(w)$ is as in \eqref{eq-sigmaaction}.
Analogously, note that when $p=\omega_n$, 
\begin{align*} \cx^n\setminus \Gamma^*_{\omega_n}&=\Set{z\in \cx^n|t-z_j\not =0 \text{ for all $t\in \Gamma$ and $j=1,\dots, n$}}\\
&= (\cx\setminus\Gamma)^n.
\end{align*} 
Therefore, when $\cx\setminus \Gamma$ has $\kappa$ components,  $(\cx\setminus \Gamma)^n$ has $\kappa^n$ components. In defining the operator $\mathcal{B}_n$, we restrict our attention to the component $U^n$.

 The restriction of $\mathcal{B}_n$ to $U_n$ and of $\mathcal{E}_n$ to $\Sigma^n U$ is motivated by the application 
 to proper maps of symmetric products as in Theorem~\ref{thm-proper}. It is interesting to ask what happens in the other 
 components, and whether there is a non-trivial analog of the Sokhotski-Plemelj  jump formula for these transforms. Preliminary investigations show that we should not expect boundedness in H\"{o}lder topologies of the maps $\mathcal{B}_n$ and  $\mathcal{E}_n$ when restricted to unbounded components of $\cx^n \setminus\Gamma^*_p$.  
Further, along $\partial\Sigma^n U$ and $\partial U^n$, the  $\mathcal{E}_n\phi$ and $\mathcal{B}_n\phi$ cannot 
be defined as Cauchy Principal values for H\"{o}lder continuous $\phi$ on $\Gamma$, and in fact the integral defining 
the Cauchy Principal value blows up at some points of the boundary, depending on the geometry near that point. 
If we define for $z\in \partial U^n$ the set $\Gamma(z,\rho)\subset \cx$  by $ \Gamma(z,\rho)= \bigcup_{j=1}^n B(z_j,\rho),$
where $B(z_j,\rho)$  the disc centered at $z_j$ and radius $\rho$, a computation shows that if $\phi\in \mathcal{C}^\alpha(\Gamma)$ for some $0<\alpha<1$, then 
\[  \int_{\Gamma \backslash \Gamma(z,\rho)}\frac{\phi(t)}{\omega_n(z,t)}dt = O\left(\rho^{1-\chi(z)}\right),\]
where for a point $z\in \partial U^n$, we denote by $\chi(z)$ the largest integer $k$ such that we can find  $k$  numbers  in the $n$-tuple
$(z_1,\dots, z_n)$ which are equal to each other. In particular, this shows that there is no hope for a simple generalization of the 
jump formula to the operator $\mathcal{B}_n$.

\section{A representation of $\mathcal{B}_n$ in convex domains} \label{sec-bnconvex}
We now derive some representation formulas for the Cauchy-N{\o}rlund transform, which are classical in numerical analysis (\cite{nor, stef,deboor}). Let $f$ be a holomorphic function on  $\Omega\subset\cx$.
Recall that the {\em $n$-th divided difference of $f$} is a function $f^{[n]}$ of $n+1$ variables which occurs as a coefficient in the Newton Interpolation formula (see \cite{deboor}.) For our purposes,  the divided difference  $f^{[n]}$ of order $n$ may be defined as follows: 
$f^{[0]}(z_1)= f(z_1)$, and we set
\[ f^{[1]}(z_1,z_2)= \frac{f^{[0]}(z_1)-f^{[0]}(z_2)}{z_1-z_2},\]
which is defined a priori if $z_1\not =z_2$. But the numerator is a holomorphic function on $\Omega^2$ which vanishes on the analytic variety $z_1-z_2=0$, and therefore, the right hand side extends as a holomorphic function on $\Omega^2$. We 
recursively define $f^{[n]}$ by 
\begin{equation}\label{eq-fndef} f^{[n]}(z_1,z_2,\dots, z_{n+1})= \frac{f^{[n-1]}(z_1,\dots,z_n)-f^{[n-1]}(z_2,\dots,z_{n+1}) }{z_1-z_n}.
\end{equation}
It follows as above that 
$f^{[n]}$ extends to an element of $\mathcal{O}(\Omega^{n+1})$.
We also denote by  $\divdif^{n}:\mathcal{O}(\Omega)\to \mathcal{O}(\Omega^{n+1})$
the mapping  $f\mapsto f^{[n]}$.
\begin{prop}\label{prop-difference}  The map $\mathcal{B}_n$ can be represented in terms of 
divided differences of the Cauchy transform $\mathcal{T}$ of \eqref{eq-cauchytransform} as:
\begin{equation}\label{eq-dvd}
\mathcal{B}_n = \divdif^{n-1}\circ\mathcal{T}.
\end{equation}
\end{prop}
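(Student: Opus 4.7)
The plan is to prove \eqref{eq-dvd} by induction on $n$, with the key input being a single partial-fraction identity for the kernel. Note first that both sides of \eqref{eq-dvd} are holomorphic on all of $U^n$: the left-hand side $\mathcal{B}_n\phi$ because $t-z_j$ never vanishes for $t\in\Gamma$ and $z_j\in U$, permitting differentiation under the integral sign, and the right-hand side because the divided difference $f^{[n-1]}$ extends to an element of $\mathcal{O}(\Omega^{n})$, as remarked right after \eqref{eq-fndef}. By the identity theorem, it therefore suffices to prove the equality on the open dense subset of $U^n$ on which $z_1\neq z_n$, which is all I will use below.

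The base case $n=1$ is immediate: $\omega_1(z_1,t)=t-z_1$ gives $\mathcal{B}_1=\mathcal{T}=\divdif^0\circ\mathcal{T}$. For the inductive step, assume $\mathcal{B}_{n-1}=\divdif^{n-2}\circ\mathcal{T}$ and use the elementary partial-fraction identity
$$
\frac{1}{\prod_{j=1}^n(t-z_j)}\;=\;\frac{1}{z_1-z_n}\left(\frac{1}{\prod_{j=1}^{n-1}(t-z_j)}\;-\;\frac{1}{\prod_{j=2}^n(t-z_j)}\right),
$$
valid for $z_1\neq z_n$ and verified by combining the two fractions on the right over the common denominator $\prod_{j=1}^n(t-z_j)$, whose numerator collapses to $(t-z_n)-(t-z_1)=z_1-z_n$. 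Multiplying by $\phi(t)/(2\pi i)$ and integrating over $\Gamma$ yields
$$
\mathcal{B}_n\phi(z_1,\ldots,z_n)\;=\;\frac{\mathcal{B}_{n-1}\phi(z_1,\ldots,z_{n-1})-\mathcal{B}_{n-1}\phi(z_2,\ldots,z_n)}{z_1-z_n}.
$$
By the inductive hypothesis and the recursive formula \eqref{eq-fndef}, the right-hand side equals $(\mathcal{T}\phi)^{[n-1]}(z_1,\ldots,z_n)=(\divdif^{n-1}\circ\mathcal{T})\phi(z_1,\ldots,z_n)$, closing the induction on the dense set $\{z_1\neq z_n\}$, hence on all of $U^n$.

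The only real subtlety, and what I would flag as the main (minor) obstacle, is ensuring that both sides of \eqref{eq-dvd} are genuinely holomorphic across the diagonals of $U^n$ where the recursion and the partial-fraction identity each appear to break down. For $\mathcal{B}_n\phi$ this is immediate from the integral representation, but for $\divdif^{n-1}(\mathcal{T}\phi)$ one must rely on the removability of the apparent singularities in \eqref{eq-fndef}, as was noted in passing before the statement of the proposition. Once this point is granted, the identity \eqref{eq-dvd} falls out of elementary partial fractions and induction, with no further analytic input required.
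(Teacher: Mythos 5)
Your proof is correct and follows essentially the same route as the paper's: an induction on $n$ whose inductive step rests on the telescoping identity $(t-z_1)^{-1}\cdots(t-z_n)^{-1}\cdot(z_1-z_n)=\prod_{j=1}^{n-1}(t-z_j)^{-1}-\prod_{j=2}^{n}(t-z_j)^{-1}$, together with analytic continuation across the diagonal $z_1=z_n$. The only (immaterial) difference is that you decompose the kernel before integrating, whereas the paper combines the two integrals coming from the divided-difference recursion into one.
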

\begin{proof} 
For $n=1$, both sides  of \eqref{eq-dvd} are equal (to the Cauchy transform.) Let $\phi$ be a continuous function on $\Gamma $ and let $f=\mathcal{T}\phi$. Assuming that for a certain $n$, we have 
$\mathcal{B}_n\phi = f^{[n-1]}$, we compute $f^{[n]}$:
\begin{align*}f^{[n]}(z_1,\dots,z_{n+1})&= \frac{1}{z_1-z_{n+1}}\left( f^{[n-1]}(z_1,\dots,z_n) 
- f^{[n-1]}(z_2,\dots,z_{n+1}) \right)\\
&=\frac{1}{2\pi i}\cdot \frac{1}{z_1-z_{n+1}}\int_{\Gamma} \phi(t)\left( \frac{1}{(t-z_1)\dots(t-z_n)}- \frac{1}{(t-z_2)\dots(t-z_{n+1})}\right)dt\\
&=\frac{1}{2\pi i}\cdot \frac{1}{z_1-z_{n+1}}\int_{\Gamma} \phi(t) \frac{(t-z_{n+1})-(t-z_1)}{(t-z_1)\dots (t-z_{n+1})}dt\\
&= \mathcal{B}_{n+1}\phi (z_1,\dots, z_{n+1}).
\end{align*}
This continues to hold when $z_1=z_{n+1}$ thanks to the fact that both sides are analytic. The result follows by induction.
\end{proof}
As a consequence we obtain a property of divided differences which is far from obvious from the representation \eqref{eq-fndef}.
\begin{cor}\label{cor-sym} If $f\in \mathcal{O}(\Omega)$, where $\Omega$ is an open subset of $\cx$, then $f^{[n-1]}\in \mathcal{O}_{\rm sym}(\Omega^{n})$, 
the space of symmetric holomorphic functions on the Cartesian power $\Omega^{n}$.
\end{cor}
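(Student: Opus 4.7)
The proof plan turns on the observation that $\B_n\phi$ is manifestly symmetric in its $n$ complex arguments. Indeed, the integrand
\[
\frac{\phi(t)}{\omega_n(z,t)} = \frac{\phi(t)}{\prod_{j=1}^{n}(t-z_j)}
\]
is invariant under any permutation of $(z_1,\dots,z_n)$, so $\B_n\phi\in\mathcal{O}_{\rm sym}(U^n)$ for every continuous $\phi$ on $\Gamma=\partial U$.

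With this in hand, and granted Proposition~\ref{prop-difference}, I would argue locally on $\Omega^n$. Fix a point $z^0=(z_1^0,\dots,z_n^0)\in\Omega^n$. Since $\Omega$ is open and contains the finite set $\{z_1^0,\dots,z_n^0\}$, I can choose a smoothly bounded planar domain $U$ with $\overline{U}\subset\Omega$ and $\{z_1^0,\dots,z_n^0\}\subset U$ (for instance, a thickening of a finite union of small discs around the $z_j^0$, smoothed out if necessary). Setting $\Gamma=\partial U$ and $\phi=f|_\Gamma$, the Cauchy integral formula gives $f|_U=\mathcal{T}\phi$. Proposition~\ref{prop-difference} then yields
\[
f^{[n-1]}\bigl|_{U^n} \;=\; \bigl(\divdif^{n-1}\circ\mathcal{T}\bigr)\phi\,\bigl|_{U^n}\;=\;(\B_n\phi)\bigl|_{U^n},
\]
and the right-hand side is symmetric by the preceding paragraph. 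Thus $f^{[n-1]}$ is symmetric on a neighborhood of $z^0$.

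To conclude, for each $\sigma\in S_n$ the two holomorphic functions $z\mapsto f^{[n-1]}(z)$ and $z\mapsto f^{[n-1]}(\sigma(z))$ on $\Omega^n$ agree on the open set $U^n$ constructed around any chosen $z^0$; by the identity principle they agree on each connected component of $\Omega^n$ meeting such a $U^n$, and hence on all of $\Omega^n$. I do not anticipate a real obstacle here: the corollary is essentially a repackaging of Proposition~\ref{prop-difference} together with the evident symmetry of $\omega_n(z,t)$ in the $z$-variables, and the only mild bookkeeping step is the selection of the auxiliary smoothly bounded subdomain $U\subset\Omega$ containing the prescribed $n$ base points.
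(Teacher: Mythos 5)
Your proof is correct and is essentially the paper's own argument: fix the $n$ points, choose a smoothly bounded $U\Subset\Omega$ containing them, write $f=\mathcal{T}(f|_{\partial U})$ on $U$ by the Cauchy integral formula, and invoke Proposition~\ref{prop-difference} together with the evident symmetry of $\mathcal{B}_n\phi$. The only difference is that you spell out the final identity-principle/localization step that the paper leaves implicit.
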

\begin{proof}Given a point $(z_1,\dots,z_{n})\in \cx^{n}$, let $U\Subset \Omega$ be a smoothly bounded open set containing 
each of the points $z_1,z_2,\dots, z_{n}$ and let $\Gamma$ be the boundary of $U$.  If we set $\phi= f|_{\Gamma}$,
 by the Cauchy integral theorem, $\mathcal{B}_1\phi=f$ on $U$, where the Cauchy integral $\mathcal{B}_1\phi$ is taken along $\Gamma$. 
 But then, by Proposition~\ref{prop-difference}, $f^{[n-1]} = \mathcal{B}_n \phi$. Since $\mathcal{B}_n\phi$ is obviously in 
 $\mathcal{O}_{\rm sym}(U^{n})$, the result follows.
\end{proof}

To state the next result, we recall some definitions. For an integer $d\geq 0$, we denote by ${\mathsf{\Sigma}}_d$ the standard $d$-simplex in $\rl^{d+1}$:
\[ \mathsf{\Sigma}_d =\Set{x\in \rl^{d+1} \ | \sum_{j=1}^{d+1}x_j =1 \text{, and }\ x_j\geq 0 \text{ for $j=1,\dots,d+1$} },\]
and let  ${\mathsf{A}}_d$ be the $d$-simplex in $\rl^d$ given by
 \[ {\mathsf{A}}_d=\Set{x\in \rl^d \ | \sum_{j=1}^d x_j\leq 1 \text{, and }\ x_j\geq 0 \text{ for $j=1,\dots, d$}}.\]
Then ${\mathsf{\Sigma}}_d$ is the graph of the function 
$(x_1,\dots, x_{d})\mapsto 1- \sum_{j=1}^{d}x_j$ over ${\mathsf{A}}_d$, i.e.,
\[{\mathsf{\Sigma}}_{d} = \Set{ x\in \rl^{d+1} \ | x_{d+1}=1- \sum_{j=1}^{d}x_j \text{, and } \ (x_1,\dots, x_{d})\in {\mathsf{A}}_d}.\]
Therefore, on ${\mathsf{\Sigma}}_d$, we can take $(x_1,\dots, x_d)$ as coordinates, which we will refer to as the {\em standard coordinates} of 
${\mathsf{\Sigma}}_d$.

We denote the $d$-dimensional Hausdorff measure by $\mathcal{H}_d$. Note that since ${\mathsf{\Sigma}}_d$ is contained in an affine hyperplane
of $\rl^{d+1}$, the $d$-dimensional Hausdorff measure on ${\mathsf{\Sigma}}_d$ is simply the usual surface measure on the affine hyperplane.
Using the fact that ${\mathsf{\Sigma}}_d$ is the graph of the function $u(x_1,\dots, x_d)= 1- \sum_{j=1}^{d}x_j$, we can represent the 
Hausdorff measure on ${\mathsf{\Sigma}}_d$ in terms of the standard coordinates:
\begin{align} d\mathcal{H}_{d} &= \sqrt{1+ \sum_{j=1}^{d} \left(\frac{\partial u}{\partial x_j}\right)^2} dx_1dx_2\dots dx_{d}\nonumber\\
&= \sqrt{1+d}\cdot\,dx_1dx_2\dots dx_{d}.\label{eq-hausdorff}
\end{align}
We denote by $\langle z, w\rangle$ the standard Hermitian inner product $\sum z_j \overline{w_j}.$ We prove a complex version of a classical representation of divided differences:
\begin{prop}[Genocchi-Hermite formula] Let $U$ be a {\em convex} domain in $\cx$ and $f\in \mathcal{O}(U)$. Then,
for $z\in U^{n+1}$:\begin{equation}\label{eq-gh} f^{[n]}(z) = \frac{1}{\sqrt{n+1}} \int_{{\mathsf{\Sigma}}_{n}}f^{(n)}\left(\langle z, \tau\rangle\right)d\mathcal{H}_{n}(\tau).\end{equation}
\end{prop}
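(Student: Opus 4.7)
The plan is to prove \eqref{eq-gh} by induction on $n$, after using \eqref{eq-hausdorff} to rewrite the Hausdorff-measure integral as a flat integral over $\mathsf{A}_n$, which absorbs the factor $\sqrt{n+1}$. Writing $\tau=(x_1,\dots,x_n,1-x_1-\cdots-x_n)$, the claim reduces to
\[
f^{[n]}(z_1,\dots,z_{n+1}) = \int_{\mathsf{A}_n} f^{(n)}\bigl(z_1 x_1+\cdots+z_n x_n+z_{n+1}(1-x_1-\cdots-x_n)\bigr)\,dx_1\cdots dx_n.
\]
Convexity of $U$ enters here: since the $\tau_j\geq 0$ sum to $1$, the point $\langle z,\tau\rangle$ is a real convex combination of $z_1,\dots,z_{n+1}\in U$ and hence lies in $U$, so $f^{(n)}(\langle z,\tau\rangle)$ is defined. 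The base case $n=0$ is immediate: $\mathsf{A}_0$ is a point and both sides equal $f(z_1)$.

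For the inductive step I would integrate out $x_n$ first, with $x_1,\dots,x_{n-1}$ held fixed; writing $s=x_1+\cdots+x_{n-1}$, the inner variable runs over $[0,1-s]$. Setting $u=z_1x_1+\cdots+z_nx_n+z_{n+1}(1-x_1-\cdots-x_n)$, one has $\partial u/\partial x_n=z_n-z_{n+1}$, so the fundamental theorem of calculus gives
\[
\int_0^{1-s} f^{(n)}(u)\,dx_n = \frac{f^{(n-1)}\bigl(z_1x_1+\cdots+z_{n-1}x_{n-1}+z_n(1-s)\bigr) - f^{(n-1)}\bigl(z_1x_1+\cdots+z_{n-1}x_{n-1}+z_{n+1}(1-s)\bigr)}{z_n-z_{n+1}}.
\]
Integrating the remaining variables over $\mathsf{A}_{n-1}$ and invoking the inductive hypothesis for each of the two $(n-1)$-simplex integrals (once with the tuple $(z_1,\dots,z_{n-1},z_n)$, once with $(z_1,\dots,z_{n-1},z_{n+1})$) yields
\[
\int_{\mathsf{A}_n} f^{(n)}(\langle z,\tau\rangle)\,dx_1\cdots dx_n = \frac{f^{[n-1]}(z_1,\dots,z_{n-1},z_n) - f^{[n-1]}(z_1,\dots,z_{n-1},z_{n+1})}{z_n-z_{n+1}}.
\]

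To recognize the right-hand side as $f^{[n]}(z_1,\dots,z_{n+1})$, I would invoke Corollary~\ref{cor-sym}, which tells us that divided differences of every order are symmetric in their arguments. Symmetry of $f^{[n-1]}$ rewrites the first term in the numerator as $f^{[n-1]}(z_n,z_1,\dots,z_{n-1})$, after which \eqref{eq-fndef} applied to the permuted tuple $(z_n,z_1,\dots,z_{n-1},z_{n+1})$ identifies the displayed quotient with $f^{[n]}(z_n,z_1,\dots,z_{n-1},z_{n+1})$; symmetry of $f^{[n]}$ then gives $f^{[n]}(z_1,\dots,z_{n+1})$, completing the induction.

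The main technical obstacle is that the $u$-substitution is degenerate when $z_n=z_{n+1}$, so the argument above is only literally valid on the open dense subset of $U^{n+1}$ where all $z_j$ are pairwise distinct. This is harmless, however: the right-hand side of \eqref{eq-gh} is holomorphic on all of $U^{n+1}$ by differentiation under the integral sign (the integrand and its $z$-derivatives are uniformly bounded on the compact simplex), and the left-hand side is holomorphic on $U^{n+1}$ by construction, so the identity extends from the distinct-arguments locus to all of $U^{n+1}$ by continuity.
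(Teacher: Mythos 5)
Your proof is correct and is essentially the paper's argument run in reverse: the paper starts from the recursion for $f^{[n]}$ (rewritten via Corollary~\ref{cor-sym} so that the difference is taken in the last slot), applies the inductive hypothesis, and uses the one-variable fundamental-theorem identity plus an explicit change of variables onto ${\mathsf{\Sigma}}_n$, whereas you start from the flat integral over ${\mathsf{A}}_n$, integrate out the last coordinate by the same fundamental-theorem step, and then invoke the same symmetry to recognize the divided difference. The ingredients (induction, the graph parametrization absorbing $\sqrt{n+1}$, convexity to keep $\langle z,\tau\rangle$ in $U$, and analytic continuation across the locus $z_n=z_{n+1}$) all match the paper's proof, so no further comparison is needed.
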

\begin{proof}We proceed by induction. When $n=0$, the left hand side is $f^{[0]}(z)=f(z)$,  $\langle z,\tau\rangle =z$,  and the right hand side is 
$ 1\cdot\int_{{\mathsf{\Sigma}}_0} f(z) d\mathcal{H}_0(\tau).$
Since in 0 dimensions the Hausdorff measure is the counting measure, the result follows. Now we assume the result for $n-1$, i.e., for $w\in U^n$, we have
\[ f^{[n-1]}(w) = \frac{1}{\sqrt{n}} \int_{{\mathsf{\Sigma}}_{n-1}}f^{(n-1)}\left(\langle w,\tau\rangle\right)d\mathcal{H}_{n-1}(\tau).\]
Then,
\begin{align}
f^{[n]}(z_1,\dots, z_{n+1})
&= \frac{1}{z_n-z_{n+1}}\left( f^{[n-1]}(z_1,\dots,z_n)- f^{[n-1]}(z_1,\dots,z_{n-1},z_{n+1})\right)\label{eq-alternative}\\
&=\frac{1}{z_n-z_{n+1}} \frac{1}{\sqrt{n}} \int_{{\mathsf{\Sigma}}_{n-1}}\left(f^{(n-1)}\left(\tau_1z_1+\dots+\tau_nz_n\right)\right.\nonumber\\
&\phantom{abcdefghijklmnokqrstuv}\left.-f^{(n-1)}(\tau_1z_1+\dots+\tau_{n-1}z_{n-1}+\tau_nz_{n+1}) \right)d\mathcal{H}_{n-1}(\tau)\nonumber\\
&= \frac{1}{\sqrt{n}} \int_{{\mathsf{\Sigma}}_{n-1}}\tau_n\left(\int_0^1f^{(n)} \left(\sum_{j=1}^{n-1}\tau_jz_j+s\tau_nz_n+(1-s)\tau_nz_{n+1} \right)ds \right)d\mathcal{H}_{n-1}(\tau)\label{eq-one}\\
&=\int_{{\mathsf{A}}_{n-1}}\tau_n\left(\int_0^1f^{(n)} \left(\sum_{j=1}^{n-1}\tau_jz_j+s\tau_nz_n+(1-s)\tau_nz_{n+1} \right)ds \right)d\tau_1d\tau_2\dots d\tau_{n-1}.\label{eq-two}
\end{align}
To obtain the representation \eqref{eq-alternative}  from the recursive definition \eqref{eq-fndef}	we first switch $z_1$ with $z_n$, followed by a use of Corollary~\ref{cor-sym} to reorder the variables in the symmetric function $f^{[n-1]}$. To obtain \eqref{eq-one} we have used the formula
\[ g(w_1)-g(w_2)=(w_1-w_2)\int_0^1 g'(sw_1+(1-s)w_2)ds,\]
which is justified since $U$ is convex and therefore the line segment joining the points $w_1, w_2\in U$ is also in $U$. In \eqref{eq-two} we used the relation \eqref{eq-hausdorff} and the fact that the point $(\tau_1,\dots,\tau_n)$ ranges over
${\mathsf{\Sigma}}_n$ as the coordinates $(\tau_1,\dots,\tau_{n-1})$ range over ${\mathsf{A}}_{n-1}$.

 Let $\theta_1,\dots, \theta_{n+1}$ denote the restrictions of the natural 
coordinates of $\rl^{n+1}$ to ${\mathsf{\Sigma}}_n$.  Therefore $(\theta_1,\dots, \theta_n)$ are standard coordinates on ${\mathsf{\Sigma}}_n$  and these coordinates range over the simplex ${\mathsf{A}}_n$  when $(\theta_1,\dots,\theta_n)\in {\mathsf{\Sigma}}_n$.
We define a map
\[ \Phi:[0,1]\times {\mathsf{\Sigma}}_{n-1}\to {\mathsf{\Sigma}}_n\]
as follows: using the coordinates $(\tau_1,\dots, \tau_{n-1})\in {\mathsf{A}}_{n-1}$ on ${\mathsf{\Sigma}}_{n-1}$ and the coordinates $(\theta_1,\dots,\theta_n)\in {\mathsf{A}}_n$ on ${\mathsf{\Sigma}}_n$, the map is given as
\[ [0,1]\times {\mathsf{A}}_{n-1}\to {\mathsf{A}}_n \text{ with } \quad (s, \tau_1,\dots, \tau_{n-1})\mapsto (\theta_1,\dots, \theta_n)  \]
where
\[ \theta_j=\tau_j \text{\quad for $j=1,\dots, n-1$}, 
\text{  and \quad}  \theta_n =  s\left(1-\sum_{j=1}^{n-1}\tau_j\right).\]
A computation shows that the absolute value of the Jacobian determinant is given by
\[ \abs{\det\Phi'(s,\tau)} = 1-{\sum_{j=1}^{n-1}\tau_j} = \tau_n.\]
Therefore, \eqref{eq-two} can be written as
\begin{align*}
f^{[n]}(z)&= \int_{[0,1]\times {\mathsf{A}}_{n-1}}f^{(n)}\left(\langle z,\Phi(s,\tau)\rangle\right)\abs{\det\Phi'(s,\tau)}ds d\tau_1\dots  d\tau_{n-1}\\
&= \int_{{\mathsf{A}}_n}f\left(\langle z,\theta\rangle\right) d\theta_1\dots d\theta_n,
\end{align*} 
where in the last line we use the change of variables formula. Finally using \eqref{eq-hausdorff} (for $d=n+1$) we conclude that
\[ f^{[n]}(z) = \frac{1}{\sqrt{n+1}} \int_{{\mathsf{\Sigma}}_{n}}f^{(n)}\left(\langle z, \theta \rangle\right)d\mathcal{H}_{n}(\theta).\]
The result now follows by induction.

\end{proof}

\section{Regularity of Divided Differences}
\subsection{Notation}
\label{sec-notation}
Recall that $\mathcal{C}^{k,\alpha}(\Gamma)$ is the space of functions on the smooth curve $\Gamma$ which are $k$
times continuously differentiable with respect to arc length, and such that the $k$-th derivative is H\"{o}lder continuous of order $\alpha$.
This linear space  
becomes a Banach space with the norm
\[ \norm{\phi}_{\mathcal{C}^{k,\alpha}(\Gamma)} = \norm{\phi}_{\mathcal{C}^k(\Gamma)} +\vert{\phi^{(k)}}\vert_{\alpha},\]
with  $ \norm{\phi}_{\mathcal{C}^k} = \sum_{j=0}^k \norm{\phi^{(j)}}_{\infty},$
where $\norm{\cdot}_{\infty}$  is the sup norm and the derivatives are taken with respect to arc length on $\Gamma$, and 
$\abs{\cdot}_\alpha$ is the H\"{o}lder semi-norm defined as
\[ \abs{\psi}_\alpha = \sup_{s\not=s'} \frac{\abs{\psi(s)-\psi(s')}}{\abs{s-s'}}.\]

In dealing with functions of several variables we use the standard multi-index conventions. Recall that $\mathcal{A}^{k,\alpha}(\Omega)$ denotes 
the space $\mathcal{C}^{k,\alpha}(\Omega)\cap \mathcal{O}(\Omega)$.  For a $\Phi\in \mathcal{A}^{k,\alpha}(\Omega)$, where $\Omega\Subset\cx^n$ is a bounded domain, we use the standard norm
\[ \norm{\Phi}_{\mathcal{C}^{k,\alpha}(\Omega)} = \norm{\Phi}_{\mathcal{C}^k(\overline{\Omega})}+ \sum_{\abs{\gamma}=k} \abs{\partial^\gamma \Phi}_\alpha,\]
where 
\[ \partial^\gamma=\left(\frac{\partial}{\partial z_1}\right)^{\gamma_1}\dots\left(\frac{\partial}{\partial z_n}\right)^{\gamma_n} \text{ and }
\norm{\Phi}_{\mathcal{C}^k(\overline{\Omega})} = \sum_{\abs{\gamma}\leq k}\norm{\partial^\gamma \Phi}_{\infty}.\]
Also, $\abs{\cdot}_\alpha$ denotes the H\"{o}lder $\alpha$-seminorm of a function given by
\[ \abs{\Psi}_\alpha = \sup_{z\not =w}{ \frac{\abs{\Psi(z)- \Psi(w)}}{\abs{z-w}^\alpha}}.\]
\subsection{ H\"{o}lder regularity of divided differences} Recall that
$\divdif^{n-1}$ denotes the map which sends a holomorphic function $f$ to its $(n-1)$-th divided difference $f^{[n-1]}$.  
\begin{lem}\label{lem-deltacont}
Let $U$ be a convex domain in $\cx$. Then, $\divdif^{n-1}$ is continuous from $\mathcal{C}^{k+n-1,\alpha}(U)$ 
to $\mathcal{C}^{k,\alpha}(U^n)$.
\end{lem}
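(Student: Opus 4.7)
My plan is to prove the lemma by applying the Genocchi-Hermite formula \eqref{eq-gh} directly and differentiating under the integral sign. The formula rewrites the $(n-1)$-th divided difference as
\[ f^{[n-1]}(z) = \frac{1}{\sqrt{n}}\int_{\mathsf{\Sigma}_{n-1}}f^{(n-1)}(\langle z,\tau\rangle)\,d\mathcal{H}_{n-1}(\tau), \]
where $z=(z_1,\dots,z_n)\in U^n$ and $\tau=(\tau_1,\dots,\tau_n)$ ranges over the standard $(n-1)$-simplex. Since the entries of $\tau$ are nonnegative reals summing to $1$, convexity of $U$ guarantees that $\langle z,\tau\rangle = \sum_j \tau_j z_j$ lies in $U$, so $f^{(n-1)}$ is evaluated only at points where $f$ and its derivatives are controlled.

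First I would handle the sup-norm bounds on $\partial^\gamma f^{[n-1]}$ for multi-indices $|\gamma|\le k$. Differentiating under the integral sign (justified by smoothness of the integrand in $z$ and compactness of the domain of integration), I obtain
\[ \partial^\gamma f^{[n-1]}(z) = \frac{1}{\sqrt{n}}\int_{\mathsf{\Sigma}_{n-1}}\tau^\gamma\,f^{(n-1+|\gamma|)}(\langle z,\tau\rangle)\,d\mathcal{H}_{n-1}(\tau), \]
because each $\partial_{z_j}$ applied to $f^{(n-1)}(\langle z,\tau\rangle)$ produces a factor $\tau_j$ together with one additional derivative of $f$. Since $|\tau^\gamma|\le 1$ on the simplex and $\mathcal{H}_{n-1}(\mathsf{\Sigma}_{n-1})$ is a finite constant, this yields
\[ \|\partial^\gamma f^{[n-1]}\|_\infty \le C_n\|f^{(n-1+|\gamma|)}\|_\infty \le C_n\|f\|_{\mathcal{C}^{k+n-1}(\overline U)}, \]
which handles the $\mathcal{C}^k$ part of the target norm.

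Next, for the H\"{o}lder seminorm of the top-order derivatives $|\gamma|=k$, I would subtract the integral representations at points $z,w\in U^n$:
\[ \partial^\gamma f^{[n-1]}(z)-\partial^\gamma f^{[n-1]}(w) = \frac{1}{\sqrt n}\int_{\mathsf{\Sigma}_{n-1}}\tau^\gamma\bigl[f^{(k+n-1)}(\langle z,\tau\rangle)-f^{(k+n-1)}(\langle w,\tau\rangle)\bigr]d\mathcal{H}_{n-1}(\tau). \]
The inequality $|\langle z-w,\tau\rangle|\le\sum_j|z_j-w_j|\tau_j\le|z-w|\sum_j\tau_j=|z-w|$, combined with the H\"{o}lder condition on $f^{(k+n-1)}$ inherited from $f\in\mathcal{C}^{k+n-1,\alpha}(U)$, gives an integrand bound of $|\tau^\gamma|\cdot|f^{(k+n-1)}|_\alpha|z-w|^\alpha$, so that after integration
\[ |\partial^\gamma f^{[n-1]}|_\alpha \le C_n\,|f^{(k+n-1)}|_\alpha \le C_n\,\|f\|_{\mathcal{C}^{k+n-1,\alpha}(U)}. \]

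I do not expect a serious obstacle here: the Genocchi-Hermite formula does most of the work by expressing $f^{[n-1]}$ as an integral of a single derivative of $f$ at an affine combination of $z_1,\dots,z_n$. The delicate issue of extending $f^{[n-1]}$ across the diagonals where \eqref{eq-fndef} is a priori indeterminate is already absorbed into the representation, so the regularity assertion reduces to routine bookkeeping with differentiation under the integral and one H\"{o}lder estimate. The essential role of convexity is to keep the affine argument $\langle z,\tau\rangle$ inside $U$, so that only interior values of $f$ appear in the integrand.
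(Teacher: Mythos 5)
Your proof is correct and follows essentially the same route as the paper: the Genocchi--Hermite representation, differentiation under the integral sign to get the sup-norm bounds $\|\partial^\gamma f^{[n-1]}\|_\infty \le C\|f^{(|\gamma|+n-1)}\|_\infty$, and the H\"{o}lder estimate for $|\gamma|=k$ via $|\langle z-w,\tau\rangle|\le C|z-w|$. The only (cosmetic) difference is that you track the operator norm explicitly, whereas the paper deduces continuity from the qualitative membership $f^{[n-1]}\in\mathcal{C}^{k,\alpha}(U^n)$ via the closed graph theorem; your explicit bounds make that step unnecessary.
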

\begin{proof} Thanks to the fact that $\mathcal{T}\left(f|_{\partial U}\right)=f$ for a function in the space
$\mathcal{A}(U)=\mathcal{O}(U)\cap \mathcal{C}(\overline{U})$ of holomorphic functions continuous up to the boundary on $U$, we have from 
\eqref{eq-dvd} that $\divdif^{n-1} f = \mathcal{B}_n f$ for $f\in \mathcal{C}^{k+n-1,\alpha}(U)$. Since $\B_n$ is given by an integral, it is clearly a closed operator, so that by the closed graph theorem, it suffices to verify that for each $f\in \mathcal{C}^{k+n-1,\alpha}(U)$,
we have $\mathcal{B}_n f\in \mathcal{C}^{k,\alpha}(U)$. Since $U$ is convex, we can use the representation \eqref{eq-gh}. For a multi-index $\gamma\in \mathbb{N}^n$, if $\abs{\gamma}\leq k$, we can repeatedly differentiate under the integral sign to obtain
\[ \partial^\gamma f^{[n-1]}(z)= \frac{1}{\sqrt{n}}\int_{{\mathsf{\Sigma}}_{n-1}}\tau^\gamma\cdot f^{(\abs{\gamma}+n-1)}\left(\langle z, \tau\rangle\right)d\mathcal{H}_{n-1}(\tau).\]
Noting that $\norm{\tau^\gamma}_\infty \leq 1$ for $\tau\in {\mathsf{\Sigma}}_{n-1}$, we have
$\norm{\partial^\gamma f^{[n-1]}}_\infty \leq C \norm{f^{(\abs{\gamma}+n-1)}}_\infty.
$
It further follows that 
if $\abs{\gamma}<k$, then each 
derivative $\partial^\gamma f^{[n-1]}$ is uniformly continuous and therefore extends to $\overline{U^n}$ continuously. Consequently,
$f^{[n-1]} \in \mathcal{C}^{k-1}(\overline{U^n}).$
Now let $z,w\in U^n$ and let $\gamma\in \mathbb{N}^n$ be a multi-index such that $\abs{\gamma}=k$. Then,
\begin{align}\abs{\partial^\gamma f^{[n-1]}(z)- \partial^\gamma f^{[n-1]}(w)}
&= \abs{ \frac{1}{\sqrt{n}}\int_{{\mathsf{\Sigma}}_{n-1}}
\tau^\gamma
\cdot
\left(f^{(\abs{\gamma}+n-1)}\left(\langle z, \tau\rangle\right)
-f^{(\abs{\gamma}+n-1)}\left(\langle w, \tau\rangle\right)\right)
d\mathcal{H}_{n-1}(\tau)}\nonumber\\
&\leq \frac{1}{\sqrt{n}}\abs{f^{(k+n-1)}}_\alpha \cdot \int_{{\mathsf{\Sigma}}_{n-1}}\tau^\gamma \abs{\langle z,\tau\rangle - \langle w, \tau\rangle}^\alpha d\mathcal{H}_{n-1}(\tau)\nonumber\\
& =  \frac{1}{\sqrt{n}}\abs{f^{(k+n-1)}}_\alpha \cdot \int_{{\mathsf{\Sigma}}_{n-1}}\tau^\gamma \abs{\langle z-w,\tau\rangle}^\alpha d\mathcal{H}_{n-1}(\tau)\nonumber\\
&\leq \frac{1}{\sqrt{n}}\abs{f^{(k+n-1)}}_\alpha \cdot \int_{{\mathsf{\Sigma}}_{n-1}}\tau^\gamma \abs{\tau}^\alpha\abs{z-w}^\alpha d\mathcal{H}_{n-1}(\tau)\nonumber\\
&= C \cdot\abs{f^{(k+n-1)}}_\alpha\cdot \abs{z-w}^\alpha.\label{eq-holder1}
\end{align}
It follows that for $\abs{\gamma}=k$, the function $\partial^\gamma f^{[n-1]}$ is H\"{o}lder continuous and therefore uniformly 
continuous on $U^n$, and therefore extends continuously to $\overline{U^n}$, so that $f^{[n-1]}\in \mathcal{C}^k(\overline{U^n})$.
Combining with the  H\"{o}lder continuity of the $k$-th partials again, we see that
$f^{[n-1]}\in \mathcal{C}^{k,\alpha}({U^n})$. The proof is complete.
\end{proof}

\maketitle
\section{Proof of Theorem~\ref{thm-main}}\label{sec-thmmainproof}
\subsection{The case $n=1$} When $n=1$, Theorem~\ref{thm-main} is a well known classical result (see \cite{hen, kress, mush}.) 

\subsection{$U$ convex, and $n\geq 2$} In this case the result follows from the representation \eqref{eq-dvd} and Lemma~\ref{lem-deltacont}. Indeed $\B_n= \divdif^{n-1}\circ \mathcal{T}$, and since we know that $\mathcal{T}:\mathcal{C}^{k+n-1,\alpha}(\Gamma)\to \mathcal{C}^{k+n-1,\alpha}(U)$ 
and $\divdif^{n-1}: \mathcal{C}^{k+n-1,\alpha}(U)\to  \mathcal{C}^{k,\alpha}(U^n)$ are continuous, the result follows.

\subsection{$U$ simply connected} Suppose that $U \subset \mathbb{C}$ is a nonconvex, simply connected domain.  Let $\phi 
\in \mathcal{C}^{k+n-1,\alpha}(\Gamma)$, where $\Gamma= \partial U$.  Let $f=\mathcal{T} \phi \in \mathcal{C}^{k+n-1,\alpha}(U)$. By the Riemann mapping theorem there exists a biholomorphism $\Psi:\D\to U$. 
The boundary regularity of $\Psi$ may be deduced from the  classical 
{\em Kellogg-Warschawski Theorem} which we will now recall. 
\begin{result}[See \cite{war}] \label{result-kw} Let $f:G\to D$ be a proper holomorphic mapping of planar domains. If for some integer $k\geq 1$ the boundaries of $G$ and $D$ are of class $\mathcal{C}^{k}$, then the mapping $f$ is of class
$\mathcal{C}^{k-1,\theta}(G)$, for each $0<\theta<1$.
\end{result}
({\em Remark:} In the classical literature, in this and other results on boundary regularity of holomorphic maps in one complex variable, it is usually assumed that $f$ is the Riemann map from a simply connected domain to the unit disc. However,  since the proofs depend only on local considerations at the boundary, these extend immediately to the more general situation of proper holomorphic mappings of domains. Recall that in one variable and for bounded domains, a proper holomorphic map is a local biholomorphism off a finite set of branch points.)

Since $\Gamma$ is of class $\mathcal{C}^{k+n+1}$, it follows  that for each $0<\theta<1$ we have $\Psi\in \mathcal{C}^{k+n,\theta}(\D)$.  Let $\Lambda =\Psi^{-1}$, then by another application of the same result, $\Lambda \in \mathcal{C}^{k+n,\theta}(U)$.  Let  $g=f \circ \Psi$, so that $f=g \circ \Lambda$.  We first wish to show that $g \in \mathcal{C}^{k+n-1,\alpha}(\mathbb{D})$.

We denote by  $\bold{\Psi}_j(z)$ the jet of order $j$ of the function $\Psi$ at the point $z$:
\[ \mathbf{\Psi}_j(z)=(\Psi(z),\Psi'(z),\dots,\Psi^{(j)}(z)).\]
Recall the Fa\`{a} di Bruno formula, which says that we may express the $\ell$-th derivative of $g$ as
\begin{equation}\label{eq-fdb} g^{(\ell)}(z)=\sum_{j=1}^\ell f^{(j)}(\Psi(z)) B_j\left( \bold{\Psi}_j(z)\right),\end{equation}
where $B_j:\cx^{\ell+1}\to \cx$ are holomorphic polynomials. (Although we do not need this information, the $B_j$'s are given explicitly as
\[ B_j(p_0, p_1,\dots,p_l)= \sum \frac{l!}{b_1!\dots b_l!}\left(\frac{p_1}{1!}\right)^{b_1}\dots \left(\frac{p_l}{l!}\right)^{b_l},\]
where the sum is over all  solutions in nonnegative integers $b_k$ to  the equations  $b_1 + 2b_2 + \dots + \ell b_\ell = \ell$ and  $b_1 + \dots + b_{\ell} =j.$) If we introduce the function  $F_j$ given by
$F_j(p_0,\dots,p_\ell) = f^{(j)}(p_0) B_j(p_1,\dots,p_\ell),$
we can rewrite \eqref{eq-fdb} as
\[ g^{(\ell)}(z)= \sum_{j=1}^{\ell} F_{j}(\bold{\Psi}_\ell(z)).\]
Note that $F_{j} \in \mathcal{C}^{\alpha}(U^\ell)$  if $j\leq k+n-1$, because $F_j$ is the product of H\"{o}lder continuous functions of order $\alpha$. Then, we have,
\begin{align*}
\abs{g^{(k+n-1)}(z)-g^{(k+n-1)}(w)} &=\abs{\sum_{j=1}^{k+n-1} F_{j}(\mathbf{\Psi}_{k+n-1}(z))-F_{j}(\mathbf{\Psi}_{k+n-1}(w))}\\
&\leq \sum_{j=1}^{k+n-1} \abs{F_{j}(\mathbf{\Psi}_{k+n-1}(z))-F_{j}(\mathbf{\Psi}_{k+n-1}(w))}\\
&\leq  \sum_{j=1}^{k+n-1}\abs{F_j}_\alpha \abs{\mathbf{\Psi}_{k+n-1}(z) - \mathbf{\Psi}_{k+n-1}(w)}^{\alpha} &\text{(using the H\"{o}lder condition)}\\
& = C \abs{\mathbf{\Psi}_{k+n-1}(z) - \mathbf{\Psi}_{k+n-1}(w)}^{\alpha}.\\
&\leq \ C \abs{z-w}^\alpha,
\end{align*}
where the last step holds since $\bold{\Psi}_{k+n-1}\in \mathcal{C}^{1,\theta}(\D)$ for each $0<\theta<1$.
It follows therefore  that $g \in \mathcal{C}^{k+n-1,\alpha}(\mathbb{D})$.

Recall that $f^{[n-1]}=(g\circ \Lambda)^{[n-1]}$ where $\Lambda = \Psi^{-1}$ is the Riemann map from $U$ to $\D$,
and $\Lambda$ is in $\mathcal{C}^{k+n,\theta}(U)$ for $0<\theta<1$. 
  We use an analog of the Fa\`{a} di Bruno formula for divided differences. By  \cite[Theorem~1]{flo}, at a point $z\in U^{n}$ the $(n-1)$-th divided difference of the  composite function $f=g\circ\Lambda$  is given by
\begin{equation}\label{eq-dfdb}
f^{[n-1]}(z)=\sum_{j=1}^{n-1}\Lambda^{\{j,n-1\}}(z)\cdot g^{[j]}(\bold{\Lambda}_j(z)),
\end{equation}
where we have $\bold{\Lambda}_j(z)=(\Lambda(z_1), \Lambda(z_2), \dots, \Lambda(z_{j+1}))$
(a discrete analog of the $j$-jet), and 
the $\Lambda^{\{j,n-1\}}$ are the functions on $U^{n}$ given by
\[ \Lambda^{\{j,n-1\}}(z)= \sum_{j=1}^{n-1}\left(\Lambda^{[1]}(z_1,\cdot)\Lambda^{[1]}(z_2,\cdot)\dots\Lambda^{[1]}(z_j,\cdot) \right)^{[n-j-1]}(z_{j+1},\dots,z_n). \]
Note that  $\Lambda\in \mathcal{A}^{k+n, \theta}({U})$, we claim that:
\begin{equation}\label{eq-lajn1}
\Lambda^{\{j,n-1\}}\in\mathcal{A}^{k+j,\theta}({U^{n}}).
\end{equation}
Since $\Lambda^{\{j,n-1\}}$ is a polynomial in divided differences of $\Lambda$ of order at most $(n-j)$, the claim would
follow if we can show that $\Lambda^{[\ell]}\in \mathcal{A}^{k+n-\ell,\theta}(U^{\ell+1})$. This would follow from 
Lemma~\ref{lem-deltacont} if $U$ were convex, but we will instead use the fact that $\Lambda$ is a biholomorphism,
and its inverse $\Psi$ is indeed defined on the convex domain $\D$, and therefore Lemma~\ref{lem-deltacont} does apply 
to it. By repeatedly taking divided differences, we see that 
\[ \Lambda^{[\ell]}(z_1,\dots, z_{\ell+1}) = \frac{P}{\prod_{i<j}\Psi^{[1]}(\Lambda(z_i), \Lambda(z_j))},\]
where $P$ is a polynomial in divided differences of $\Psi$ of order at most $\ell$, evaluated at the points $\Lambda(z_1), \dots, \Lambda(z_{\ell+1})$. Note that by injectivity of $\Psi$, the 
denominator is a nonvanishing function on $U^{\ell+1}$ of class $\mathcal{A}^{k+n-\ell, \theta}(U^{\ell+1})$. Claim \eqref{eq-lajn1} follows, since, products of H\"{o}lder functions and reciprocals of nonvanishing H\"{o}lder functions are H\"{o}lder of 
the same class.

 Since $g\in \mathcal{O}(\mathbb{D})$, and $\D$ is convex, we can represent the function $g^{[j]}$ using the  Genocchi-Hermite formula. Putting this into \eqref{eq-dfdb}, we obtain
\[
f^{[n-1]}(z)=\sum_{j=1}^{n-1}\frac{\Lambda^{\{j,n-1\}}(z)}{\sqrt{j+1}}\int_{{\mathsf{\Sigma}}_j}g^{(j)}(\langle  \bold{\Lambda}_j(z),\tau \rangle)d\mathcal{H}_j(\tau)\]
Let $\gamma$ be a multi-index with $\abs{\gamma}=k$. Differentiating under the integral sign and using the Leibniz rule for partial derivatives,
\begin{align*}
\partial^{\gamma} f^{[n-1]}(z)
&=\sum_{j=1}^{n-1}\frac{1}{\sqrt{j+1}}\sum_{0\leq\beta\leq\gamma}\binom{\gamma}{\beta}\partial^{\gamma-\beta}\Lambda^{\{j,n-1\}}(z)\cdot\partial^{\beta}\left(\int_{{\mathsf{\Sigma}}_j}g^{(j)}(\langle  \bold{\Lambda}_j(z) ,\tau\rangle)d\mathcal{H}_j(\tau)\right)\\
&=\sum_{j=1}^{n-1}\frac{1}{\sqrt{j+1}}\sum_{0\leq\beta\leq\gamma}\binom{\gamma}{\beta}\partial^{\gamma-\beta}\Lambda^{\{j,n-1\}}(z)\left(\int_{{\mathsf{\Sigma}}_j}g^{(j+\abs{\beta})}(\langle  \bold{\Lambda}_j(z),\tau \rangle)\cdot\tau^\beta\cdot ({\bold{\Lambda}_{n-1}'}(z))^\beta d\mathcal{H}_j(\tau)\right),
\end{align*}
where ${\bold{\Lambda}_{n-1}'}(z)=({{\Lambda}'}(z_1),\dots,{{\Lambda}'}(z_n))\in\cx^n$. If we define for $0\leq\ell\leq k$ and 
$z\in U^n$,
\[ \mathsf{V}_\ell(z)= \sum_{\substack{0\leq\beta\leq\gamma\\ \abs{\beta}=\ell}}\binom{\gamma}{\beta}\partial^{\gamma-\beta}\Lambda^{\{j,n-1\}}(z)\cdot ({\bold{\Lambda}_{n-1}'}(z))^\beta, \]
then by \eqref{eq-lajn1},  $\partial^{\gamma-\beta}\Lambda^{\{j,n-1\}}\in \mathcal{A}^{\abs{\beta},\theta}(U^n)$, and since $\Lambda$ is in $\mathcal{C}^{k+n,\theta}(U)$ , we conclude that $ {\bold{\Lambda}_{n-1}'}\in \mathcal{A}^{k+n-1,\theta}(U^n)$. It follows that for each $0\leq \ell \leq k$ and for each $0<\theta<1$, we have
$\mathsf{V}_\ell \in \mathcal{A}^\theta(U^n).$
In terms of the $\mathsf{V}_\ell$, we can write
\begin{equation}\label{eq-fgamma}\partial^{\gamma} f^{[n-1]}(z)= \sum_{j=1}^{n-1}\sum_{\ell=1}^k\frac{1}{\sqrt{j+1}}\mathsf{V}_\ell(z)\cdot\int_{{\mathsf{\Sigma}}_j}g^{(j+\ell)}(\langle  \bold{\Lambda}_j(z),\tau \rangle)h_\ell^{j+1}(\tau) d\mathcal{H}_j(\tau),
\end{equation}
where $h^{j+1}_\ell(\tau) = \sum_{\abs{\beta}=\ell} \tau^\beta$ is the complete symmetric polynomial in $(j+1)$ variables of 
degree $\ell$. Now
\begin{align*}
&\phantom{=}\abs{\int_{{\mathsf{\Sigma}}_j}g^{(j+\ell)}(\langle  \bold{\Lambda}_j(z),\tau \rangle)h_\ell^{j+1}(\tau) d\mathcal{H}_j(\tau)- \int_{{\mathsf{\Sigma}}_j}g^{(j+\ell)}(\langle \bold{\Lambda}_j(w) ,\tau\rangle)h_\ell^{j+1}(\tau) d\mathcal{H}_j(\tau)}\\
& = 
\abs{\int_{{\mathsf{\Sigma}}_j}\left(g^{(j+\ell)}(\langle \bold{\Lambda}_j(z),\tau \rangle)-g^{(j+\ell)}(\langle  \bold{\Lambda}_j(w), \tau\rangle) \right)h_\ell^{j+1}(\tau) d\mathcal{H}_j(\tau)}\\
& \leq C\abs{\int_{{\mathsf{\Sigma}}_j}\abs{\langle \bold{\Lambda}_j(z),\tau \rangle - \langle \bold{\Lambda}_j(w),\tau \rangle}^\alpha h_\ell^{j+1}(\tau) d\mathcal{H}_j(\tau)}\\
& \leq C\abs{\int_{{\mathsf{\Sigma}}_j}\abs{\langle \bold{\Lambda}_j(z) -\bold{\Lambda}_j(w),\tau\rangle }^\alpha h_\ell^{j+1}(\tau) d\mathcal{H}_j(\tau)}\\
& \leq C\abs{\int_{{\mathsf{\Sigma}}_j}\abs{\tau}^\alpha\abs{ \bold{\Lambda}_j(z) -\bold{\Lambda}_j(w) }^\alpha h_\ell^{j+1}(\tau) d\mathcal{H}_j(\tau)}\\
&\leq C\abs{ \bold{\Lambda}_j(z) -\bold{\Lambda}_j(w)}^\alpha\\
&\leq C \abs{z-w}^\alpha.
\end{align*}
Therefore the integral factor in each term of the double sum in \eqref{eq-fgamma}
is a H\"{o}lder continuous function of order $\alpha$, and since $\mathsf{V}_\ell$ is also H\"{o}lder continuous of order $\alpha$, we see that $\partial^\gamma f^{[n-1]}\in \mathcal{A}^\alpha(U^n)$. 
It now follows that $f^{[n-1]}=\mathcal{B}_n\phi\in\mathcal{C}^{k,\alpha}(U^n)$.

\subsection{Multiply Connected Case}
Now, let $U$ be a domain with $h$ holes, that is, $U$ is $(h+1)$-connected.
We claim that {\em we can find $h+1$ simply connected domains $\{U_j\}_{j=1}^{h+1}$, each with boundary of class $\mathcal{C}^{n+k+1}$ and an $R>0$, such that
\begin{enumerate}
\item $\displaystyle{\bigcup_{j=1}^{h+1} U_j = U}$, and 
\item if $z,w\in U$ are such that $\abs{z-w} < R$,  for at least one $j$ we have $z,w \in U_j$.
\end{enumerate}}

Assuming the claim,  let as before $f=\mathcal{B}_1\phi$. By the previous argument for the simply-connected case, we see that $\left(f|_{U_j}\right)^{[n-1]} \in \mathcal{C}^{k,\alpha}(U_j)$. Therefore, in particular, $f^{[n-1]}$ is bounded on $\overline{U^n}$. Further, if $\abs{z-w}<R$, we see that for any multi-index $\gamma$ with $\abs{\gamma}=k$, we have that $\abs{\partial^{\gamma}f^{[n-1]}(z) - \partial^{\gamma}f^{[n-1]}(w)} \leq C\abs{z-w}^{\alpha}$, since such $z,w$ would belong to a common $U_j$ by the claim. Therefore, applying \cite[Lemma~7.3]{kress}, we conclude that $f^{[n-1]} = \mathcal{B}_n\phi \in \mathcal{C}^{k,\alpha}(U^n)$. This completes the proof of Theorem~\ref{thm-main},  provided we justify the claim made above.
\subsection{Proof of Claim}
Given two subsets $V,W$ of the plane, we say that they are {\em well-separated} if we have
\[ \overline{V\setminus W}\cap \overline{W\setminus V} =\emptyset.\] 
It is not difficult to see that Condition (2) of the claim is equivalent to demanding that each pair of simply-connected sets
 $U_j$ and $U_\ell$ in the collection $\{U_j\}$ is well-separated. Note further that the condition of well-separatedness is 
 invariant under homeomorphisms, which we will take advantage of by replacing the domain $U$ with homeomorphic
 domains with simpler geometry. Indeed if $\Psi:\overline{U}\to \overline{G}$ is a diffeomorphism of class $\mathcal{C}^{k+n+1}$ then it suffices to find a cover of $G$ by simply connected open sets $\{G_i\}_{i=1}^{h+1}$,
 each with $\mathcal{C}^{k+n+1}$ boundary, and such that each pair of sets $G_i$ and $G_j$ is well-separated.  We will 
 illustrate the fact that this is always possible by using pictures.
 We may take $G$ to look like in the picture below (in this case we take $h=2$):

\begin{center}
\includegraphics[scale=.5]{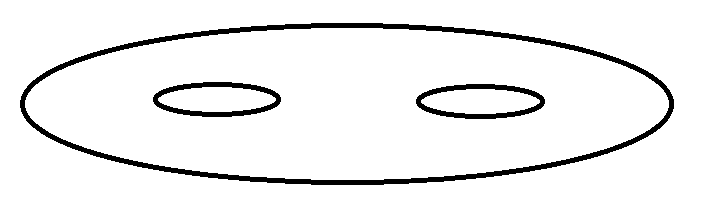}
\end{center}

 And it is clear how to construct the sets $G_1, G_2, G_3$ in such a way that they have $\mathcal{C}^{n+k+1}$ boundary and are pairwise well-separated. In the picture below, we have drawn the boundaries of $G_1$ and $G_3$ in dotted lines and that of $G_2$ in bold lines to illustrate the idea:
 \begin{center}
 \includegraphics[scale=.5]{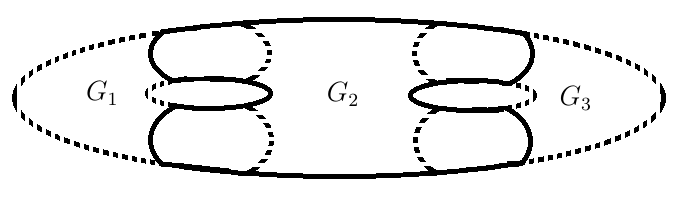}
\end{center}
The following picture shows the three domains $G_1,G_2,G_3$ separately:
\begin{center}
\includegraphics[scale=.5]{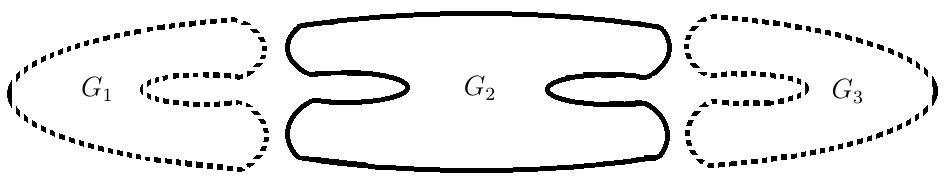}
\end{center}

\section{Proof of Theorem \ref{thm-an}}\label{sec-proofthman}
For $z,w\in \cx^n$, define
\[\delta(z,w)= \min_{\sigma \in S_n}\abs{z-\sigma(w)},\]
where $\sigma(w)$ is as in \eqref{eq-sigmaaction}. This may be thought of as a natural metric on 
the quotient $\cx^n/{S_n}$.
We compare the distance induced in this way on the symmetric product $U^n/{S_n}$ with the metric on the realization $\pi(U^n)=\Sigma^n U$.
We will use the global {\L}ojasiewicz inequality of Ji-Koll\'{a}r-Shiffman (see \cite[Corollary 6, Equation 6.2]{loj}), which states the following.
Let $f_1,\dots,f_n\in \mathbb{C}[z_1,\dots,z_N]$ and for $1\leq i \leq n$,  let $ d_i= {\rm deg}f_i$. Let $Z\subset \mathbb{C}^N$ be the common zero set of the polynomials $\{f_i\}_{i=1}^n$. Then there is a constant $C>0$ such that
\begin{equation}\label{eq-gloj}
\left(\frac{{\rm dist}(z,Z)}{1+\abs{z}^2}\right)^{\overline{B}(N,d_1,\dots,d_n)}\leq C \cdot \max_i\abs{f_i(z)},
\end{equation}
where the constant $\overline{B}(N,d_1,\dots,d_n)$ can be computed explicitly (see \cite[Section~3]{loj}.)

\begin{lem}\label{lem-loja} For any domain $U \subset \cx$, there is a $C > 0$, such that for $z,w \in U^n$, we have \[\delta(z,w)^{\Lambda_n}\leq C\abs{\pi(z)-\pi(w)},\]
where $\Lambda_n$ is as in \eqref{eq-lambdan}.
\end{lem}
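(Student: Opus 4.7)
The plan is to apply the global {\L}ojasiewicz inequality \eqref{eq-gloj} of Ji--Koll\'{a}r--Shiffman to the polynomial system
\[ f_j(z,w) = \pi_j(z) - \pi_j(w), \qquad j=1,\dots,n, \]
viewed as $n$ polynomials in the $N = 2n$ variables $(z_1,\dots,z_n,w_1,\dots,w_n) \in \cx^{2n}$, with degrees $d_j = j$. The proof then has three moving parts: (i) identifying the common zero set $Z$ of these polynomials; (ii) comparing $\delta(z,w)$ with the Euclidean distance from $(z,w)$ to $Z$; and (iii) pinning down the exponent.

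For (i), the key observation, already used in Section~\ref{sec-remarks} to show $U(n,0,\dots,0)=\Sigma^nU$, is that $\pi(z)=\pi(w)$ if and only if $z=\sigma(w)$ for some $\sigma\in S_n$ (both sides express the equality of the two $n$-tuples of roots of the same monic polynomial). Thus
\[ Z \;=\; \bigl\{(z,w)\in\cx^{2n}\colon \pi(z)=\pi(w)\bigr\} \;=\; \bigcup_{\sigma\in S_n}\bigl\{(z,w)\colon z=\sigma(w)\bigr\}. \]
For (ii), given any $(z_0,w_0)\in Z$ write $z_0=\tau(w_0)$; since every $\tau\in S_n$ acts by a unitary permutation,
\[ \delta(z,w)\le |z-\tau(w)| \le |z-z_0|+|\tau(w_0)-\tau(w)| = |z-z_0|+|w_0-w| \le \sqrt{2}\,|(z,w)-(z_0,w_0)|, \]
and taking an infimum yields $\delta(z,w)\le\sqrt{2}\,\mathrm{dist}((z,w),Z)$.

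For (iii), since $U\subset\cx$ is bounded (as in the rest of the paper), the quantity $1+|(z,w)|^2$ is uniformly bounded on $U^n\times U^n$, so \eqref{eq-gloj} applied to $\{f_j\}$ yields
\[ \delta(z,w)^{B} \;\le\; C\,\max_j|\pi_j(z)-\pi_j(w)| \;\le\; C'\,|\pi(z)-\pi(w)|, \]
where $B = \overline{B}(2n,1,2,\dots,n)$ is the explicit exponent from \cite[Section~3]{loj}. Since $\delta$ is also bounded on $U^n\times U^n$, raising to a larger power only makes the inequality weaker, so it suffices to verify $B\le \Lambda_n$.

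The main obstacle is precisely this last bookkeeping: one must inspect the explicit formula for $\overline{B}(N,d_1,\dots,d_n)$ in \cite{loj} and check that for our specific parameters $N=2n$, $(d_1,\dots,d_n)=(1,2,\dots,n)$, the bound collapses to the product $\prod_{j=1}^n d_j = n!$ when $n\le 3$, and to $\tfrac{3}{2}\,n!$ when $n>3$. The dichotomy in \eqref{eq-lambdan} reflects the case split in the Ji--Koll\'{a}r--Shiffman formula (which distinguishes low-dimensional configurations where a direct B\'ezout-type product suffices from higher-dimensional ones where an additional factor intervenes). Everything else in the argument is soft; the non-triviality is entirely in matching the general {\L}ojasiewicz exponent to $\Lambda_n$.
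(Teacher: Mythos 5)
Your proposal follows the paper's proof essentially verbatim: same polynomial system $f_j=\pi_j(z)-\pi_j(w)$ in $N=2n$ variables, same identification of $Z$ as $\bigcup_{\sigma\in S_n}\{z=\sigma(w)\}$, same use of the boundedness of $U^n\times U^n$ to absorb the denominator; your triangle-inequality bound $\delta(z,w)\le\sqrt{2}\,\mathrm{dist}((z,w),Z)$ is the one-sided version of the paper's exact identity $\mathrm{dist}((z,w),Z)=\tfrac{1}{\sqrt{2}}\,\delta(z,w)$ (obtained there by projecting onto $L_\sigma^\perp$), and it is precisely the direction needed. The only step you assert rather than carry out is the evaluation of the exponent, which the paper does explicitly: $\overline{B}(2n,1,2,\dots,n)=(3/2)^j\prod_i d_i+\theta$ with $\prod_i d_i=n!$, $\theta=0$ (since the number of equations equals $n$ in ambient dimension $2n$), and $j=0$ for $n\le 3$, $j=1$ for $n>3$, which is exactly $\Lambda_n$.
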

\begin{proof}
We will use \eqref{eq-gloj} when $N=2n$ with coordinates $(z,w)$ on $\cx^{2n}=\cx^n \times \cx^n$, where $z,w \in \cx^n.$  For $1\leq i \leq n$, we set $ f_i(z,w) = \pi_i(z)-\pi_i(w),$
so that $d_i=i.$ In the notation of Section 3 of \cite{loj}, we have
\[\overline{B}(2n,d_1,\dots,d_n)=\left(\frac{3}{2}\right)^j B(2n,d_1,\dots,d_n)+\theta \]
Since $d_i=i$ for $i\leq n$, we have $B(2n, d_1,\dots, d_n)= n!$.
We also have that $\theta =0$ since the dimension of the space is $2n$ and the number of equations is $k=n$.  Then $j= \#\Set{i< n-1|i=2}$, so that $j=0$ if $n \leq 3$ and $j=1$ otherwise.
Thus 
\[\overline{B}(2n,d_1,\dots,d_n) = \Lambda_n = \begin{cases} n! &\text{if $n\leq 3$}\\\frac{3}{2} n! &\text{if $n>3$.}\end{cases}\]

Therefore, we conclude that 
\begin{equation}\label{eq-loja}
\left(\frac{\dist\left((z,w),Z \right)}{1+\abs{z}^2+\abs{w}^2} \right)^{\Lambda_n}\leq C \cdot \max_i\abs{\pi_i(z)-\pi_i(w)},
\end{equation}
where $Z=\Set{ (z,w)\in \cx^n \times \cx^n \ | \ \pi_i(z)- \pi_i(w) =0 \text{ for } 1 \leq i \leq n}.$ Clearly if $(z,w)\in Z$ then for some $\sigma\in S_n$ we have $z=\sigma(w)$. Let us denote for $\sigma\in S_n$
\[L_\sigma = \Set{(z,w)\in\cx^n\times \cx^n \ | \ z = \sigma(w)},\]
which is an $n$-dimensional linear subspace of the complex vector space $\cx^{2n}$. We can alternatively represent $L_\sigma$ as:
\begin{align}
L_{\sigma} &= \Set{(z,w)\in\cx^n\times \cx^n \ | \ z_j - w_{\sigma(j)} = 0 \text{ for } 1 \leq j \leq n}\nonumber\\
&=\Set{(z,w)\in\cx^n\times \cx^n \ | \ \langle(z,w), e_j-e_{\sigma(j)+n}\rangle = 0, 1\leq j \leq n}\label{eq-lsigma2}
\end{align}
where $\{e_j\}_{j=1}^{2n}$ denotes the standard complex basis of $\cx^{2n}.$ We may write the affine algebraic variety $Z$ as the union of irreducible algebraic subvarieties as follows:
\begin{equation}
Z=\bigcup_{\sigma\in S_n}L_\sigma\label{eq-zlsigma}
\end{equation}
Then it is clear by \eqref{eq-lsigma2} that the orthogonal complement to $L_{\sigma}$ (in $\cx^n\times\cx^n$) is given by
\[L_{\sigma}^\perp = {\rm span} \left\{\frac{1}{\sqrt{2}}(e_j-e_{\sigma(j)+n}), 1\leq j \leq n\right\}.\]

\noindent The length of the projection of a point $(z,\sigma(w))$ onto $L_{\sigma}^\perp$ is given by:
\begin{align*}
\abs{P_{L_{\sigma}^{\perp}}(z,w)} 
&= \abs{\sum_{j=1}^n \left\langle (z,w), \frac{1}{\sqrt{2}}(e_j - e_{\sigma(j)+n})\right\rangle  \frac{1}{\sqrt{2}}(e_j - e_{\sigma(j)+n})} \\
&= \frac{1}{2}\abs{\sum_{j=1}^n (z_j - w_{\sigma(j)})(e_j - e_{\sigma(j)+n})} \\
&= \frac{1}{2} \left( \sum_{j=1}^n 2\abs{z_j - w_{\sigma(j)}}^2 \right)^{\frac{1}{2}} \\
&= \frac{1}{\sqrt{2}} \abs{z - \sigma(w)}.
\end{align*}
Using \eqref{eq-zlsigma}:
\begin{align*}
{\rm dist}((z,w), Z) &= \min_{\sigma\in S_n} {\rm dist}((z,w), L_\sigma)\\
&= \min_{\sigma\in S_n} \abs{P_{L_{\sigma}^{\perp}}(z,w)}\\
&= \min_{\sigma\in S_n} \frac{1}{\sqrt{2}}\abs{z-\sigma(w)}\\
&=\frac{1}{\sqrt{2}}\delta(z,w).
\end{align*}
Substituting the last equality  in \ref{eq-loja} we obtain:
\[\left(\frac{\frac{1}{\sqrt{2}} \delta(z,w)}{1+\abs{z}^2+\abs{w}^2} \right)^{\Lambda_n}\leq C \cdot \max_i\abs{\pi_i(z)-\pi_i(w)}\]

Since the point $(z,w)$ belongs to the bounded set $U^n\times U^n\subset\cx^{2n}$ we have
\begin{align*}
\delta(z,w)^{\Lambda_n}&\leq C \cdot \max_i\abs{\pi_i(z)-\pi_i(w)}\\
&\leq C \cdot \abs{\pi(z)-\pi(w)},
\end{align*}
which completes the proof.
\end{proof}

\subsection{Three operators} We introduce three operators $\pi_\ast, M_\gamma$ and $j_k^\ast$
 which will be needed in the proof of Theorem~\ref{thm-an}.  Let $\mathcal{O}_{\rm sym}(U^n)$ be the space of symmetric holomorphic functions on $U^n$. Then we define a push-forward map $\pi_{\ast}:\mathcal{O}_{\rm sym}(U^n)\to\mathcal{O}(\Sigma^nU)$ in the following way:
for an $f\in \mathcal{O}_{\rm sym}(U^n)$, \[(\pi_{\ast}f)(z)=f(\zeta),\] where $\zeta \in U^n$ is any point such that $\pi(\zeta)=z$.
\begin{prop}\label{pi-ast}
 $\pi_{\ast}$ is well-defined and maps $\mathcal{O}_{\rm sym}(U^n)$ to $\mathcal{O}(\Sigma^nU)$.
\end{prop}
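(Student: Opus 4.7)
My plan is to establish the two claims---well-definedness and holomorphy---separately, using the algebraic/topological structure of the symmetrization map $\pi$.

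\textbf{Well-definedness.} Suppose $\zeta, \zeta' \in U^n$ with $\pi(\zeta) = \pi(\zeta') = z$. I would invoke the fact already established in Section~\ref{sec-remarks} (in the discussion immediately after the identification $U(n,0,\dots,0) = \Sigma^n U$): whenever two points of $U^n$ have the same image under $\pi$, one is obtained from the other by a permutation $\sigma \in S_n$ acting as in \eqref{eq-sigmaaction}. Since $f \in \mathcal{O}_{\rm sym}(U^n)$, we have $f(\zeta') = f(\sigma(\zeta)) = f(\zeta)$, so the value $(\pi_* f)(z)$ is independent of the choice of preimage.

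\textbf{Continuity.} To prepare for the holomorphy argument, I would first verify that $\pi_* f$ is continuous on $\Sigma^n U$. Since $\pi: \cx^n \to \cx^n$ is a finite polynomial map and hence proper, the restriction $\pi: U^n \to \Sigma^n U$ is proper as well (for a compact $K \subset \Sigma^n U$, the preimage $\pi^{-1}(K)$ is compact in $\cx^n$ and lies in $U^n$ since the roots of $q_n(z,\cdot)$ for $z \in K$ lie in $U$). Given $z_k \to z$ in $\Sigma^n U$, any sequence $\zeta_k \in \pi^{-1}(z_k)$ is eventually contained in a compact subset of $U^n$; passing to any convergent subsequence $\zeta_{k_j} \to \zeta$, continuity of $\pi$ yields $\pi(\zeta) = z$, and continuity of $f$ gives $f(\zeta_{k_j}) \to f(\zeta) = (\pi_* f)(z)$, which forces $(\pi_* f)(z_k) \to (\pi_* f)(z)$.

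\textbf{Holomorphy.} Let $D \subset U^n$ denote the ``big diagonal'' $\bigcup_{i<j}\{\zeta_i = \zeta_j\}$, a proper analytic subvariety, and let $\Delta = \pi(D) \subset \Sigma^n U$ be the discriminant locus, also a proper analytic subvariety. At any point $\zeta \in U^n \setminus D$ the coordinates are distinct, so the Jacobian of $\pi$ at $\zeta$ is (up to sign) the Vandermonde determinant $\prod_{i<j}(\zeta_i - \zeta_j) \neq 0$. By the holomorphic inverse function theorem, $\pi$ admits a local holomorphic inverse near $\pi(\zeta)$; composing this with $f$ expresses $\pi_* f$ locally as a holomorphic function, so $\pi_* f \in \mathcal{O}(\Sigma^n U \setminus \Delta)$. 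Combined with the continuity established above, Riemann's removable singularity theorem for holomorphic functions across a proper analytic subset then extends $\pi_* f$ to an element of $\mathcal{O}(\Sigma^n U)$.

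The only step requiring any real care is the continuity argument, since one must use the properness of $\pi: U^n \to \Sigma^n U$ to rule out preimages escaping to $\partial U^n$; the removable-singularity step is then a standard invocation of Riemann's theorem once continuity is in hand.
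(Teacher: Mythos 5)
Your proof is correct and follows essentially the same route as the paper: well-definedness via the fact that fibres of $\pi$ are $S_n$-orbits, holomorphy off the discriminant via the nonvanishing Vandermonde Jacobian, and then Riemann's removable singularity theorem (the paper invokes the locally bounded version, noting only that $\pi(Z)$ is analytic by the Remmert proper mapping theorem, whereas you invoke the continuous version). Your explicit properness argument for $\pi\colon U^n\to\Sigma^n U$ is a welcome elaboration of the step the paper compresses into the assertion that $\pi_* f$ is bounded, since both claims ultimately rest on preimages of compacta not escaping to $\partial U^n$.
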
 
\begin{proof}
If $\zeta$ and $\zeta'$ are such that $\pi(\zeta)=\pi(\zeta')=z$ then $\zeta'=\sigma(\zeta)$ for some $\sigma\in S_n$ (see Section~\ref{sec-remarks}). Therefore, $f(\zeta)=f(\zeta')$ as $f$ is symmetric.  This shows that the definition makes sense and $\pi_\ast f$ is a well defined function on $\Sigma^nU$.

Now we show that $\pi_\ast f$ is holomorphic. Off the analytic set 
\[Z= \Set{ z\in U^n \ | \ \displaystyle{\prod_{i<j} (z_i-z_j) = 0}}\]
the map $\pi$ is a local biholomorphism. It follows that $\pi_\ast f$ is holomorphic on $\pi(U^n\setminus Z) \supset \Sigma^nU\setminus \pi(Z)$, where $\pi(Z)$ is an analytic set by the Remmert Proper Mapping Theorem. Since $\pi_\ast f$ is bounded, by the Riemann Removable Singularity Theorem it extends holomorphically to $\Sigma^nU$.
\end{proof}
For a multi-index $\gamma\in \mathbb{N}^n$, we define $M_\gamma$ to be the operator which multiplies a function on $\Gamma$ by the smooth function
\begin{equation}\label{eq-ugamma}
u_{\gamma}(t)= (-1)^{\abs{\gamma}} \cdot \abs{\gamma}!\cdot t^{\sum_{j=1}^n \gamma_j(n-j)},
\end{equation}
so that
\[ (M_\gamma \phi)(t)=u_\gamma(t)\phi(t). \]
For an integer $k\geq 0$, let $j_k$ denote the diagonal embedding of $\cx^n$ in the $k$-fold product $\cx^{nk}= (\cx^n)^k=\cx^n\times\dots\times\cx^n$, i.e., for a $z\in \cx^n$, we have
\[ j_k(z) = (z,\dots,z)\in \underbrace{\cx^n\times \dots\times\cx^n}_{\text{$k$ times}}.\]
Let $\Omega\subset \cx^n$, and let $\Omega^k = \Omega\times\dots\times\Omega\subset \cx^{nk}$. For a function $f$ on $\Omega$, we define the function $j^*_k f$ on $\Omega^k$ by
\begin{equation}\label{eq-jstar} j^*_k f=f\circ j_k.\end{equation}
  
\subsection{End of Proof of Theorem~\ref{thm-an}} A direct computation shows that
\[ \partial^\gamma \left( \frac{1}{q_n(z,t)}\right)= \frac{u_\gamma(t)}{q_n(z,t)^{\abs{\gamma}+1}},\]
Let $z\in \Sigma^n U$ and let  $w\in U^n$ be such that $z=\pi(w)$.  For a continuous $\phi$ on $\Gamma$, we have  :
\begin{align*}
\partial^\gamma\mathcal{E}_n\phi(z)  &=\frac{1}{2\pi i} \int_\Gamma \frac{M_\gamma \phi(t)}{q_n(z,t)^{\abs{\gamma}+1}}dt\\
&= \frac{1}{2\pi i} \int_\Gamma \frac{M_\gamma \phi(t)}{q_n(\pi(w),t)^{\abs{\gamma}+1}}dt\\
&= \frac{1}{2\pi i} \int_\Gamma \frac{M_\gamma \phi(t)}{\left((t-w_1)\dots (t-w_n)\right)^{\abs{\gamma}+1}}dt\\
&=  j^*_{\abs{\gamma}+1} \left( \mathcal{B}_{n(\abs{\gamma}+1)} \left( M_{\gamma}\phi\right)\right)(w).
\end{align*}
Since $z=\pi(w)$  it follows that 
\begin{equation}\label{eq-dgammaen}\partial^\gamma \circ\mathcal{E}_n = \pi_* \circ j^*_{\abs{\gamma}+1} \circ \mathcal{B}_{n(\abs{\gamma}+1)} \circ M_{\gamma}.\end{equation}
To prove Theorem~\ref{thm-an}, it suffices to show that for each multi-index $\gamma$ with  $\abs{\gamma}=k$, the operator $\partial^\gamma\mathcal{E}_n$ maps the space $\mathcal{C}^{n(k+1)-1,\alpha}(\Gamma)$ 
continuously into $\mathcal{C}^{\frac{\alpha}{\Lambda_n}}(\Sigma^n)$.
By \eqref{eq-dgammaen}, we have
$\partial^\gamma \circ\mathcal{E}_n = \pi_* \circ j^*_{k+1} \circ \mathcal{B}_{n(k+1)} \circ M_{\gamma}.$
The operator $M_\gamma$, being multiplication by the smooth function $u_\gamma$, maps
$\mathcal{C}^{n(k+1)-1,\alpha}(\Gamma)$ continuously to itself.  Since the boundary $\Gamma$ is of class $\mathcal{C}^{n(k+1)+1}$, it follows by Theorem~\ref{thm-main}, that  the operator
$\mathcal{B}_{n(k+1)}$ maps $\mathcal{C}^{n(k+1)-1,\alpha}(\Gamma)$  continuously to $\mathcal{C}^{0,\alpha}(U^{n(k+1)})$. Note that $j^*_k$, defined by \eqref{eq-jstar} maps  $\mathcal{C}^{0,\alpha}(U^{n(k+1)})$ to $\mathcal{C}^{0, \alpha}(U^n)$. Indeed,
  \begin{align*}\abs{j_k^* f(z)- j_k^* f(w)} & = \abs{ f(j_k(z)) -f( j_k(w)) }\\
  &\leq  \abs{ f}_\alpha  \abs{j_k(z)-j_k(w)}^\alpha\\
  &= \abs{f}_\alpha  \abs{j_k(z-w)}^\alpha\\
  & =\abs{f}_\alpha n^{\frac{\alpha}{2}} \abs{z-w}^\alpha.
  \end{align*} Therefore, $j_k^*$ maps $\mathcal{A}^\alpha(U^{n(k+1)})$ continuously to $\mathcal{A}^\alpha(U^n)$.
To complete the proof it suffices to show that $\pi_\ast$ maps $\mathcal{A}^\alpha(U^n)$ continuously to $\mathcal{A}^{\frac{\alpha}{\Lambda_n}}(\Sigma^nU)$. By Proposition~\ref{pi-ast}, $\pi_\ast$ preserves 
holomorphicity, so it suffices to show that $\pi_{\ast}$ is continuous from $\mathcal{C}^{\alpha}(U^n)$ to $\mathcal{C}^{\frac{\alpha}{\Lambda_n}}(\Sigma^nU)$. If $\pi(\zeta) = z$ and $\pi(\lambda) = w$ we have
\begin{align*}
\abs{\pi_{\ast}f(z)- \pi_{\ast}f(w)}&=\abs{f(\zeta)-f(\lambda)}\\
&\leq\abs{f}_{\alpha}\abs{\zeta-\lambda}^{\alpha}
\end{align*}
Then, for any $\sigma \in S_n$, we know that $w=\pi(\sigma(\lambda))$. Therefore, the relation
\[\abs{\pi_{\ast}f(z)- \pi_{\ast}f(w)}\leq\abs{f}_{\alpha}\abs{\zeta-\sigma(\lambda)}^{\alpha}\] holds for every $\sigma\in S_n$. Hence
\begin{align*}
\abs{\pi_{\ast}f(z)- \pi_{\ast}f(w)}&\leq \abs{f}_{\alpha}\min_{\sigma\in S_n}\abs{\zeta-\sigma(\lambda)}^{\alpha}\\
&=\abs{f}_{\alpha}\delta(\zeta,\lambda)^{\alpha}\\
&\leq C\cdot \abs{f}_{\alpha} \abs{\pi{(\zeta)}-\pi{(\lambda)}}^{\frac{\alpha}{\Lambda_n}}&\text{By Lemma~\ref{lem-loja}}\\
&=C\cdot \abs{f}_{\alpha}\abs{z-w}^{\frac{\alpha}{\Lambda_n}}.
\end{align*}

\noindent Thus
\begin{align}
\abs{\pi_\ast f}_\frac{\alpha}{\Lambda_n}&= \sup_{z\neq w} \frac{\abs{\pi_\ast f(z)-\pi_\ast f(w)}}{\abs{z-w}^{\frac{\alpha}{\Lambda_n}}}\nonumber\\
&\leq C\abs{f}_\alpha\label{eq-seminormf}
\end{align}
Also we have
\begin{align}
\norm{\pi_\ast f}_\infty &= \sup_{z\in\Sigma^nU}\abs{\pi_\ast f(z)}\nonumber\\
&=\sup_{z\in\Sigma^nU}\abs{f(\pi^{-1}(z))}\nonumber\\
&=\sup_{\zeta\in U^n}\abs{f(\zeta)}\nonumber\\
&=\norm{f}_\infty\label{norm-f}
\end{align}

\noindent Combining equations \eqref{eq-seminormf} and \eqref{norm-f} we obtain
\begin{align*}
\norm{\pi_\ast f}_{\mathcal{C}^{\frac{\alpha}{\Lambda_n}}(\Sigma^nU)}&=\abs{\pi_\ast f}_{\frac{\alpha}{\Lambda_n}}+\norm{\pi_\ast f}_\infty\\
&\leq C\abs{f}_\alpha + \norm{f}_\infty\\
&\leq C\norm{f}_\alpha
\end{align*}
Thus, $\pi_{\ast}$ maps $\mathcal{A}_{\rm sym}^{\alpha}(U^n)$ continuously to $\mathcal{A}^{\frac{\alpha}{\Lambda_n}}(\Sigma^nU)$.  The proof of Theorem~\ref{thm-an} is complete.
\section{Range of the operators $\B_n$ and $\mathcal{E}_n$}
We note here that for $n\geq 2$, the operator $\B_n$  (resp. $\mathcal{E}_n$) is not surjective from the space 
$\mathcal{C}^{k+n-1,\alpha}(\Gamma)$ to $\mathcal{A}_{\rm sym}^{k,\alpha}(U^n)$ (resp. from the space $\mathcal{C}^{(k+1)n-1,\alpha}(\Gamma)$ to the space $\mathcal{A}^{k,\frac{\alpha}{\Lambda_n}}(\Sigma^n U)$.)
Indeed, it is not difficult to characterize the range of the operators $\B_n$ and $\mathcal{E}_n$.  Suppose the domain $U$ 
has $m$ holes, and for $1\leq i \leq m$, we fix a point  $a_i$ in the $i$-th hole. Let $h_p^q$ denote the {\em complete symmetric polynomial} in $q$ variables of degree $p$ given by:
\[ h_p^q(z)=\sum_{\abs{\gamma}=p}z^{\gamma},\] where we set $h_p^q = 0$ if $p<0$. Consider the collection of functions $\mathcal{F}\subset \mathcal{O}_{\rm sym}(U^{n})$, with 
members of the form 
\[ h_r^{n}(z), \text{and } \frac{1}{(z_1-a_i) \dots (z_{n}-a_i)}h_{r-1}^{n}\left(\frac{1}{z_1-a_i},  \dots, \frac{1}{z_{n}-a_i}\right)
\]
 where $r\in\mathbb{N}=\Set{0,1,\dots}$ and $1\leq i\leq m$. We also let $\mathcal{F}_*= \Set{\pi_*(f)| f\in \mathcal{F}}$. Note that the linear span of the collection of functions  $\Set{ z^r, \frac{1}{(z-a_i)^r}| \text{ $r\in\mathbb{N}$ and $1\leq i \leq m$}}$
is dense in $\mathcal{A}^{k+n-1,\alpha}(U)$ (a consequence of the Mergelyan approximation theorem). Recall the representations $\mathcal{B}_n=\divdif^{n-1}\circ \mathcal{T}$ and $\mathcal{E}_n= \pi_*\circ \mathcal{B}_n$
(special case of \eqref{eq-dgammaen} for $\gamma=0$). Using these representations with the facts that $\mathcal{B}_n(z^r)=h_r^n(z)$ 
and 
\[ \mathcal{B}_n\left( \frac{1}{(z-a_i)^r}\right)= \frac{1}{(z_1-a_i) \dots (z_{n}-a_i)}h_{r-1}^{n}\left(\frac{1}{z_1-a_i},  \dots, \frac{1}{z_{n}-a_i}\right),\] we can show the following:
\begin{prop}
The linear span of $\mathcal{F}$ is dense in the range of $\mathcal{B}_n: \mathcal{C}^{k+n-1,\alpha}(\Gamma)\to\mathcal{C}^{k,\alpha}(U^n)$, and that of $\mathcal{F}_*$ is dense in the range of $\mathcal{E}_n: \mathcal{C}^{(k+1)n-1,\alpha}(\Gamma)\to \mathcal{A}^{k,\frac{\alpha}{\Lambda_n}}(\Sigma^n U)$.
\end{prop}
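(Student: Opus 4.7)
The plan is to exploit the factorizations $\mathcal{B}_n=\divdif^{n-1}\circ\mathcal{T}$ (Proposition~\ref{prop-difference}) and $\mathcal{E}_n=\pi_*\circ\mathcal{B}_n$ (the $\gamma=0$ case of \eqref{eq-dgammaen}) in order to reduce both density claims to the Mergelyan-type density, already stated in the excerpt, of
\[
\mathcal{G}=\{z^r\}_{r\geq 0}\cup\bigl\{(z-a_i)^{-r}\bigr\}_{r\geq 1,\ 1\le i\le m}
\]
in $\mathcal{A}^{m,\alpha}(U)$ for the appropriate values $m=k+n-1$ and $m=(k+1)n-1$.

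The first step is to verify that $\mathcal{T}:\mathcal{C}^{m,\alpha}(\Gamma)\to\mathcal{A}^{m,\alpha}(U)$ is surjective: its continuity is the $n=1$ case of Theorem~\ref{thm-main}, and Cauchy's integral formula gives $\mathcal{T}(f|_\Gamma)=f$ on $U$ for every $f\in\mathcal{A}^{m,\alpha}(U)$. Combined with Proposition~\ref{prop-difference} this yields the identity $\divdif^{n-1}f=\mathcal{B}_n(f|_\Gamma)$, which exhibits $\divdif^{n-1}:\mathcal{A}^{k+n-1,\alpha}(U)\to\mathcal{C}^{k,\alpha}(U^n)$ as the composition of the bounded trace $\mathcal{A}^{k+n-1,\alpha}(U)\to\mathcal{C}^{k+n-1,\alpha}(\Gamma)$ with the operator $\mathcal{B}_n$ of Theorem~\ref{thm-main}. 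In the same way $\pi_*\circ\divdif^{n-1}=\mathcal{E}_n\circ(\cdot|_\Gamma)$, which is therefore a continuous map $\mathcal{A}^{(k+1)n-1,\alpha}(U)\to\mathcal{A}^{k,\alpha/\Lambda_n}(\Sigma^n U)$ thanks to Theorem~\ref{thm-an}. Using surjectivity of $\mathcal{T}$ once more, the range of $\mathcal{B}_n$ equals $\divdif^{n-1}\bigl(\mathcal{A}^{k+n-1,\alpha}(U)\bigr)$ and the range of $\mathcal{E}_n$ equals $\pi_*\divdif^{n-1}\bigl(\mathcal{A}^{(k+1)n-1,\alpha}(U)\bigr)$. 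Since continuous linear maps preserve density, applying $\divdif^{n-1}$ (respectively $\pi_*\circ\divdif^{n-1}$) to the dense subspace $\mathrm{span}\,\mathcal{G}$ produces a dense subspace of the corresponding range.

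A short residue computation (or an induction on \eqref{eq-fndef}) then gives
\[
\divdif^{n-1}(z^r)=h^n_{r-n+1}(z_1,\dots,z_n),
\]
\[
\divdif^{n-1}\!\left(\frac{1}{(z-a_i)^r}\right)=\frac{(-1)^{n-1}\,h^n_{r-1}\!\left(\tfrac{1}{z_1-a_i},\dots,\tfrac{1}{z_n-a_i}\right)}{\prod_{j=1}^n(z_j-a_i)},
\]
(with the convention $h^n_s\equiv 0$ for $s<0$), so $\divdif^{n-1}(\mathrm{span}\,\mathcal{G})$ is, up to sign and reindexing of $r$, precisely $\mathrm{span}\,\mathcal{F}$, and applying $\pi_*$ yields $\mathrm{span}\,\mathcal{F}_*$. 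This completes the argument.

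The only nontrivial step is the Hölder-norm version of Mergelyan invoked from the excerpt: the classical theorem produces only uniform density, so upgrading to the $\mathcal{C}^{m,\alpha}$ topology requires a Stein-type extension of $f\in\mathcal{A}^{m,\alpha}(U)$ to a compactly supported $\mathcal{C}^{m,\alpha}(\mathbb{C})$ function (available thanks to the $\mathcal{C}^{n(k+1)+1}$ regularity of $\partial U$), a mollification coupled with a $\bar\partial$-correction via Cauchy--Pompeiu, and classical Runge to consolidate the approximating poles onto the prescribed points $a_1,\dots,a_m$. All remaining ingredients—the structural identities, the continuity estimates, and the explicit divided-difference formulas—are immediate consequences of results already established in the paper.
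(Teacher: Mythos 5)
Your proposal is correct and follows essentially the same route as the paper's (sketched) argument: factor $\mathcal{B}_n=\divdif^{n-1}\circ\mathcal{T}$ and $\mathcal{E}_n=\pi_*\circ\mathcal{B}_n$, invoke the H\"{o}lder-norm Mergelyan-type density of $\{z^r,\,(z-a_i)^{-r}\}$ in $\mathcal{A}^{k+n-1,\alpha}(U)$, and transport it through the continuous operators using the explicit divided differences of these basic functions. Your displayed formulas are in fact slightly more accurate than the paper's (the index should be $h^n_{r-n+1}$ rather than $h^n_r$, and the factor $(-1)^{n-1}$ is needed), though neither discrepancy affects the linear spans or the conclusion.
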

For example the function $z_1z_2\in \mathcal{O}_{\rm sym}(U^2)$ is not in the range of $\mathcal{B}_2$.
\section{Proof of Theorem~\ref{thm-proper}}\label{sec-proper}
We begin by recalling the structure of  proper holomorphic maps between symmetric products of  domains (see \cite[Corollary~3]{cg} and \cite{edi1, ediz1}). Given a holomorphic map $f:U\to V$, there is a unique map $\Sigma^n f: \Sigma^n U\to \Sigma^n V$ such that
for $z\in U^n$ we have
\[ (\Sigma^n f)(\pi(z_1,\dots, z_n))= \pi(f(z_1),\dots, f(z_n)),\]
where $\pi:U^n\to \Sigma^n U$ is the symmetrization map as in  \eqref{eq-pidef}.
The map $\Sigma^n f$ is called the {\em $n$-fold symmetric product} of 
the map $f$. For further properties of the symmetric products of maps, see \cite{cg}. In particular, we need the following integral representation of the 
symmetric power $\Sigma^n f$ of a map $f:U\to \cx$, when $f$ extends continuously to $\partial U$ (see \cite[Proposition~2.4]{cg}):  There is a polynomial 
automorphism $\mathfrak{P}$ of $\cx^n$,  such that we can write
\begin{equation}\label{eq-sym1}
\Sigma^n f = \mathfrak{P}\circ \Psi,
\end{equation}
where $\Psi= (\Psi_1,\dots, \Psi_n):\Sigma^nU\to \cx^n$ is the map whose
$\ell$-th component is given by
\begin{equation}\label{eq-psil}
\Psi_\ell(z) = \frac{1}{2\pi i} \int_{\partial U} \left(f(t)\right)^\ell \cdot \frac{q'_n(z,t)}{q_n(z,t)}dt.
\end{equation} 
Here $q_n(z,t)$ is as in \eqref{eq-p1}:
\[ q_n(z,t) = t^n -z_1t^{n-1}+\dots +(-1)^n z_n = \sum_{j=0}^n (-1)^{j}z_j \, t^{n-j},\]
where we set $z_0=1$ and 
\begin{equation}\label{eq-qprime}
q_n'(z,t)= \frac{\partial q_n}{\partial t}(z,t)= \sum_{j=0}^{n-1}(-1)^j (n-j)z_j t^{n-j-1}.
\end{equation}
If we let $\partial U=\Gamma$, then plugging in  \eqref{eq-qprime} into 
\eqref{eq-psil}  we obtain the representation
\begin{equation}\label{eq-psilsum}
 \Psi_\ell(z)=\sum_{j=0}^{n-1}(-1)^j (n-j) z_j \mathcal{E}_n\left(t^{n-j-1}f|_{\partial U}\right)
\end{equation}

\noindent Using the classical technique of Remmert and Stein (see \cite{remmertstein,nar}), one can show that  if $U$ and $V$ are bounded planar domains, then each proper holomorphic map $F:\Sigma^nU\to \Sigma^n V$ is of the form $F=\Sigma^n f$ for a proper holomorphic map $f:U\to V$ (see \cite{edi1,ediz1, cg}.)
 By hypothesis the boundaries of each of $U$ and $V$ is of class 
$\mathcal{C}^{nk+n+1}$. By the Kellogg-Warschawski theorem, 
(see Result~\ref{result-kw} above), the proper holomorphic map $f: U\to V$ is of class $\mathcal{C}^{nk+n, \theta}(U)$ for each $0<\theta<1$, therefore, a fortiori of class $\mathcal{C}^{n(k+1)-1,\theta}(U)$. Then the restriction $f|_{\partial U}\in \mathcal{C}^{n(k+1)-1,\theta}(\partial U)$. Applying Theorem~\ref{thm-an}, each term in the sum \eqref{eq-psilsum} belongs to $\mathcal{A}^{k, \frac{\theta}{\Lambda_n}}(\Sigma^nU)$. Therefore the map $\Psi\in \mathcal{A}^{k, \frac{\theta}{\Lambda_n}}(\Sigma^nU)$, and since $F= \Sigma^n f = \mathfrak{P}\circ \Psi$, where $\mathfrak{P}$ is a polynomial automorphism of $\cx^n$, we have that $F\in \mathcal{A}^{k, \frac{\theta}{\Lambda_n}}(\Sigma^nU)$. The proof is complete.


\begin{thebibliography}{12}
\bibitem{ay1}Agler, J.,  Young, N. J.;
A commutant lifting theorem for a domain in $\cx^2$ and spectral interpolation. 
{\em  J. Funct. Anal.}  {\bf  161}(2), 452--477 (1999)
\bibitem{ay2} Agler, J.,  Young, N. J.;
A Schwarz lemma for the symmetrized bidisc.
{\em Bull. London Math. Soc.} {\bf  33}(2), 175--186 (2001)
\bibitem{ay3} Agler, J.,  Young, N. J.;
The hyperbolic geometry of the symmetrized bidisc. 
{\em J. Geom. Anal.} {\bf 14}(3), 375--403 (2004)
\bibitem{cg} Chakrabarti, Debraj, and Gorai, Sushil; { Function theory and  holomorphic maps on symmetric products of planar domains.} To appear in {\em Journal of Geometric Analysis}.
\bibitem{costara}Costara, C.;
The symmetrized bidisc and Lempert's theorem. 
 {\em Bull. London Math. Soc.} {\bf 36}(5), 656--662 (2004) 
\bibitem{costara2} Costara, C.;
The $2\times 2$ spectral Nevanlinna-Pick problem.
{\em J. London Math. Soc.(2)} {\bf  71}(3), 684--702 (2005)
\bibitem{deboor} de Boor, Carl;
Divided differences. 
{\em Surv. Approx. Theory }{\bf 1} (2005), 46--69. 
\bibitem{edi1} Edigarian, A.; Proper holomorphic self-mappings of the symmetrized bidisc,
{\em Ann. Polon. Math.} {\bf 84}(2), 181-184 (2004)
\bibitem{ediz1} Edigarian, A., Zwonek, W.; 
Geometry of the symmetrized polydisc, 
{\em Arch. Math. (Basel)} {\bf 84}(4), 364-374 (2005)
\bibitem{flo}Floater, Michael S., Lyche, Tom; Two Chain Rules for Divided Differences and Fa\`{a} di Bruno's Formula. {\em Mathematics of Computation.} 76 (2007), 867-877.
\bibitem{hen}Henrici, Peter;
{\em Applied and computational complex analysis. Vol. 3. } 
 John Wiley \& Sons, Inc., New York, 1986.
 \bibitem{loj}Ji, Shanyu, Koll\'{a}r, J\'{a}nos, and Shiffman, Bernard;
A global {\L}ojasiewicz inequality for algebraic varieties. 
{\em Trans. Amer. Math. Soc.} {\bf 329} (1992), no. 2, 813--818.
 \bibitem{kress} Kress, Rainer;
{\em Linear integral equations.}
Third edition.  Springer, New York, 2014.
\bibitem{mush}Muskhelishvili, N. I.; 
{\em Singular integral equations:
Boundary problems of function theory and their application to mathematical physics.}  Dover Publications, Inc., New York, 1992.
\bibitem{nar}
Narasimhan, R.; {\em Several complex variables}. Reprint of the 1971 original. Chicago Lectures in Mathematics. 
University of Chicago Press, Chicago, IL (1995)
\bibitem{nor}N{\o}rlund, N. E.; {\em Vorlesungen \"{u}ber Differenzenrechnung}. Springer, Berlin, 1924.
\bibitem{remmertstein}Remmert, R., Stein, K.;
Eigentliche holomorphe Abbildungen;
{\em Math. Z.}  {\bf  73}, 159--189 (1960) 
\bibitem{stef}Steffensen, J. F.; {\em Interpolation}, Williams and Wilkins, Baltimore, 1927.
\bibitem{war} Warschawski, Stefan E.;
On the higher derivatives at the boundary in conformal mapping. 
{\em Trans. Amer. Math. Soc.} {\bf 38} (1935), no. 2, 310--340. 
\end{thebibliography}
\end{document}